\newtheorem{theorem}{Theorem}[section]
\newtheorem{lemma}[theorem]{Lemma}
\theoremstyle{definition}
\newtheorem{definition}[theorem]{Definition}
\theoremstyle{remark}
\newtheorem{remark}[theorem]{Remark}
\newtheorem{note}[theorem]{Note}
\numberwithin{equation}{section}
\newcommand{\N}{\mathbb{N}}
\newcommand{\R}{\mathbb{R}}
\newcommand{\cC}{\mathcal{C}}
\newcommand{\cF}{\mathcal{F}}
\DeclareMathOperator{\Lip}{Lip}
\newcommand{\be}{\begin{equation}}
\newcommand{\ee}{\end{equation}}
\begin{document}

\title{Dimension preserving approximation}

%    Information for first author
\author{S. Verma}
\address{Department of Mathematics, IIT Delhi, New Delhi, India 110016 }
\email{saurabh331146@gmail.com}

\author{P.R. Massopust}
\address{Centre of Mathematics, Technical University of Munich, Boltzmannstr. 3,
85748 Garching b. Munich, Germany }
\email{massopust@ma.tum.de}

%    \thanks will become a 1st page footnote.
%\thanks{The first author was supported in part by NSF Grant \#000000.}

%    Information for second author

%\thanks{The author was supported in part by UGC.}

%    General info
\subjclass[2010]{Primary 28A80; Secondary 10K50, 41A10}

%\date{January 1, 2001 and, in revised form, June 22, 2001.}

%\dedicatory{This paper is dedicated to our advisors.}
 
\keywords{Hausdorff dimension, fractal interpolation, fractal function, Bernstein polynomials, constrained approximation}

\begin{abstract}
This article introduces the novel notion of dimension preserving approximation for continuous functions defined on $[0,1]$ and initiates the study of it. Restrictions and extensions of continuous functions in regards to fractal dimensions are also investigated.
\end{abstract}

\maketitle

%%%%%%%%%%%%%%%%%%%%%%%%%%%%%%%%%%%%%%%%%%%%%%%%%%%%%%%%%%%%%%%%%%%%%%%%

%%%%%%%%%%%%%%%%%%%%%%%%%%%%%%%%%%%%%%%%%%%%%%%%%%%%%%%%%%%%%%%%%%%%%%%%
.

\section{Introduction} 
We start our discussion with a function $g\in \cC[0,1] := \{f:[0,1]\to \R : \text{$f$ is continuous on $[0,1]$}\}$ with $  \dim G_{g} > 1$. Here and in the following, we denote the graph of a function $g$ by $G_g$. In the present informal discussion, we use $\dim$ to denote a fractal dimension. For the existence of such functions $g$, see, for instance, \cite{Shen}.

The function $f:[0,1] \to \mathbb{R}$ defined by $f(x) :=\int\limits_{0}^{x} g(t)dt$ will have the following properties: 
\[
\dim G_{f} =1\quad\text{and}\quad\dim G_{f'} =\dim G_{g} > 1.
\]
If we approximate $f$ by Bernstein polynomials $B_n(f)$ of order $n$,
\[
B_n(f)(x) := \sum_{k=0}^n \binom{n}{k} f\left(\frac{k}{n}\right)\, x^k (1-x)^{n-k},  
\]
then $B_n(f)$ converges uniformly to $f$ and $(B_n(f))'$ converges uniformly to $f'=g$. (We refer the interested reader to \cite{Gal} for Bernstein polynomials and their properties.) Note that $(B_n(f))'$ is again a polynomial and, thus, the fractal dimension of $(B_n(f))'$ is equal to one. The above conveys that the approximation of a function by Bernstein polynomials preserves the function class but not the dimension of its derivative. 

The current article targets to study approximation aspects with respect to fractal dimensions of a function and its derivative.

The structure of this paper is as follows. After a brief introduction to fractal dimensions in Section 2, the novel concept of dimension preserving approximation is introduced in Section 3 and some of its properties are discussed. Section 4 deals with the restriction and extension of continuous functions in regards to fractal dimensions.
 
\section{Hausdorff dimension, box dimension, and packing dimension}

In this section, we introduce those fractal dimensions that are relevant for the present paper. These are the Hausdorff dimension, the box dimension, and the packing dimension defined for nonempty subsets of a separable metric space $(X,d_X).$ For more details about these fractal dimensions and for proofs, we refer the interested reader to, for instance, \cite{Falc3,Fal,PM1}.

To this end, let $(X,d_X)$ be a separable metric space. For a non-empty subset $U$ of $X,$ the diameter of $U$ is defined as \[
|U| := \sup\{d_X(x,y): x,y \in U\}.
\]
Let $F$ be a subset of $X$ and $s$ a non-negative real number. The $s$-dimensional Hausdorff measure of $F$ is defined by 
\[
H^s(F) := \lim_{\delta \rightarrow 0^+} \inf \left\{\sum_{i=1}^{\infty}|U_i|^s:  F \subseteq \bigcup\limits_{i=1}^{\infty} U_i~~\text{and }~ |U_i| < \delta \right\},
\]
where the infimum is taken over all countable covers $\{U_i\}_{i\in\N}$ of $F$ by sets $U_i \subset F$ with $|U_i| < \delta$.
\begin{definition}
Let $F \subset X$ and let $s \ge 0.$ The Hausdorff dimension of $F$ is defined by
\[
\dim_H F :=\inf\{s:H^s(F)=0\} =\sup\{s:H^s(F)=\infty\}.
\]
\end{definition}
The Hausdorff dimension satisfies the countable stability property: Let $\{X_i\}_{i\in I}$ be a countable family of sets. Then
\be\label{stab}
\dim_H \left(\bigcup_{i\in I} X_i\right) = \sup\limits_{i\in I}\left\{\dim_H X_i\right\}.
\ee
\begin{definition} 
Let $F$ be any non-empty bounded subset of $X$ and let $N_{\delta}(F)$ be the smallest number of sets of diameter at most $\delta$ which can cover $F.$ The lower and upper box dimensions of $F$ are defined as 
\[
\underline{\dim}_B F :=\varliminf_{\delta \rightarrow 0^+} \frac{\log N_{\delta}(F)}{- \log \delta}
\]
and
\[
\overline{\dim}_B F :=\varlimsup_{\delta \rightarrow 0^+} \frac{\log N_{\delta}(F)}{- \log \delta},
\]
respectively. If the above two expressions are equal, their common value is called the box dimension of $F$:
\[
\dim_B F :=\lim_{\delta \rightarrow 0^+} \frac{\log N_{\delta}(F)}{- \log \delta}.
\]
\end{definition}

Let us introduce a few notions that will lead to the definition of packing dimension. Let $s \ge 0$ and $\delta > 0$. We denote by
\begin{align*}
\mathcal{P}^s_{\delta} (F) := \sup\Bigg\{ & \sum_{i=1}^{\infty} |B_i|^s: \{B_i\} ~\text{is a collection of countably many} \\ 
& \text{disjoint balls of radii at most $\delta$ with centres in $F$} \Bigg\}.
\end{align*}  
It is observed that $\mathcal{P}^s_{\delta} (F)$ decreases with $\delta.$ This further implies that the limit $$ \mathcal{P}^s_{0} (F)= \lim_{\delta \to 0^+} \mathcal{P}^s_{\delta} (F)$$ exists. As $\mathcal{P}^s_{\delta}$ is only a pre-measure, one defines  
\begin{align*}
\mathcal{P}^s (F) :=\inf \Bigg\{\sum_{i=1}^{\infty} \mathcal{P}^s_{0} (F_i): F \subseteq \bigcup\limits_{i=1}^{\infty} F_i \Bigg\}.
\end{align*}
Thus, the packing measure $\mathcal{P}^s$ of $F$ is the infimum of the packing pre-measures $\mathcal{P}^s_0$ of countable covers of $F$.

\begin{definition}
Let $F \subset X$ and $s \ge 0.$ The packing dimension of $F$ is defined by 
\[
\dim_P F :=\inf\{s:\mathcal{P}^s(F)=0\}=\sup\{s:\mathcal{P}^s(F)=\infty\}.
\]
\end{definition}

It is known that the following inequalities hold between these types of fractal dimensions \cite{Falc3}:
\[
\dim_H F \leq \underline{\dim}_B F \leq \overline{\dim}_B F
\]
and
\[
\dim_H F \leq \dim_P F  \leq \overline{\dim}_B F.
\]

Although there are several other notions of fractal dimension, this article will deal only with those that were introduced above.

\section{Dimension preserving approximation }

In this section, we present some results relating to the invariance of fractal dimensions under certain maps. In what follows, 
let $(X,d_X)$ be a separable metric space, and $(Y,d_Y)$ be a separable normed linear space. We equip the space $X\times Y$ with a metric $d$ defined by
\[
d\big((x,y),(x',y')\big) := \sqrt{d_X(x,x')^2 +d_Y(y,y')^2}.
\]
The Lipschitz constant of a map $f:X\to Y$ is given by
\[
\Lip (f) = \sup_{x,x'\in X, x \neq x'} \frac{d_Y\big(f(x),f(x')\big)}{d_X(x,x')}.
\]
A map $f$ is said to be \emph{Lipschitz} if $\Lip (f) < + \infty$.

The following result is a generalization of Theorem 1 in \cite{RD}.
\begin{lemma}\label{lipdim}
Let $g:X \rightarrow Y$ be a continuous map between metric spaces $(X,d_X)$ and $(Y,d_Y)$. For a fixed Lipschitz map $f:X \rightarrow Y$, we have that 
\[
\dim_H G_{f+g} = \dim_H G_g\quad\text{and}\quad\dim_P G_{f+g} = \dim_P G_g.
\]
 \end{lemma}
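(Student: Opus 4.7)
The plan is to construct an explicit bi-Lipschitz bijection between the graphs $G_g$ and $G_{f+g}$, and then invoke the well-known invariance of Hausdorff and packing dimensions under bi-Lipschitz maps.

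Concretely, first I would define $\phi \colon G_g \to G_{f+g}$ by $\phi(x,g(x)) := (x, f(x)+g(x))$, which is obviously a bijection with inverse $\phi^{-1}(x,(f+g)(x)) = (x, g(x))$. The heart of the proof is to bound the distortion of $\phi$. For points $p=(x,g(x))$ and $p'=(x',g(x'))$ in $G_g$, the norm structure on $Y$ and the triangle inequality give
\[
d_Y\bigl(f(x)+g(x),\,f(x')+g(x')\bigr) \leq \Lip(f)\,d_X(x,x') + d_Y\bigl(g(x),g(x')\bigr),
\]
and then an application of $(a+b)^2 \leq 2a^2 + 2b^2$ lets me estimate $d(\phi(p),\phi(p'))^2$ by a constant (depending only on $\Lip(f)$) times $d_X(x,x')^2 + d_Y(g(x),g(x'))^2 = d(p,p')^2$. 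So $\phi$ is Lipschitz with an explicit constant like $\sqrt{1 + 2\,\Lip(f)^2 + 2}$.

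For the reverse direction, the key observation is that $g = (f+g) + (-f)$, where $-f$ is Lipschitz with $\Lip(-f) = \Lip(f)$. Applying exactly the same estimate shows that $\phi^{-1}$ is Lipschitz with the same constant. Hence $\phi$ is bi-Lipschitz.

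Finally, I would invoke the standard fact that if $\psi \colon A \to B$ is bi-Lipschitz between subsets of metric spaces then $\dim_H A = \dim_H B$ and $\dim_P A = \dim_P B$ (these are direct consequences of the definitions of the Hausdorff and packing measures, since Lipschitz maps distort diameters of covers and radii of packings by at most a multiplicative constant). Applying this to $\phi$ yields both claimed equalities. I do not expect a real obstacle here; the only subtlety is being careful that the product metric on $X \times Y$ interacts cleanly with the triangle inequality for $d_Y$, which is precisely why the assumption that $(Y, d_Y)$ is a \emph{normed} linear space (and not just a metric space) is used --- it guarantees translation invariance of $d_Y$, so that $d_Y(f(x)+g(x), f(x')+g(x'))$ equals $\|(f+g)(x) - (f+g)(x')\|$ and splits via the triangle inequality in the norm.
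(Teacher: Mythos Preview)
Your proposal is correct and follows essentially the same approach as the paper: both construct the same explicit bijection $G_g \to G_{f+g}$, $(x,g(x)) \mapsto (x,f(x)+g(x))$, verify it is bi-Lipschitz via the triangle inequality in the normed space $Y$ together with $(a+b)^2 \le 2a^2 + 2b^2$, and then invoke the bi-Lipschitz invariance of Hausdorff and packing dimension. The only cosmetic difference is that for the inverse bound the paper carries out a direct chain of inequalities, whereas you observe the cleaner symmetry $g = (f+g) + (-f)$ to recycle the forward estimate.
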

 \begin{proof}
  We define a map $T_f : G_g \rightarrow G_{f+g} $ by $T_f((x,g(x))) :=(x,f(x)+g(x))$, $x\in A$. It is easy to check that the map $T_f$ is onto. Now,
\begin{align*} 
      d\big(T_f((x,g(x))),T_f((y,& g(y)))\big) = d\big((x,f(x)+g(x)),(y,f(y)+g(y))\big)\\ =&\sqrt{d_X(x,y)^2+d_Y\big(f(x)+g(x),f(y)+g(y)\big)^2}\\
       \le &\sqrt{d_X(x,y)^2+2 d_Y(f(x),f(y))^2+2 d_Y(g(x),g(y))^2}\\
       \le & \sqrt{d_X(x,y)^2+2L^2d_X(x,y)^2+2 d_Y(g(x),g(y))^2}\\  \le & M\sqrt{d_X(x,y)^2+d_Y(g(x),g(y))^2}\\
        =& M d\big((x,g(x)),(y,g(y))\big),
\end{align*}
 where $L$ is the Lipschitz constant of $f$ and $M := \max\{ \sqrt{1+2L^2},\sqrt{2}\}.$

On the other hand,
\begin{align*} 
       d\big(T_f((x, & g(x))),T_f((y,g(y)))\big) = d(\big(x,f(x)+g(x)\big),\big(y,f(y)+g(y)\big))\\ =&~\sqrt{d_X(x,y)^2+d_Y\big(f(x)+g(x),f(y)+g(y)\big)^2}\\
               = &~ \frac{M}{M} \sqrt{d_X(x,y)^2+d_Y\big(f(x)+g(x),f(y)+g(y)\big)^2}\\
               \ge &~ \frac{1}{M} \sqrt{d_X(x,y)^2 (1+2L^2)+2 d_Y\big(f(x)+g(x),f(y)+g(y)\big)^2}\\
               \ge &~ \frac{1}{M} \sqrt{d_X(x,y)^2+2d_Y\big(f(x)+g(x),f(y)+g(y)\big)^2+2 d_Y(f(x),f(y))^2}\\
                \ge &~ \frac{1}{M} d\big((x,g(x)),(y,g(y))\big).
\end{align*}
Therefore, $T_f$ is a bi-Lipschitz map. Since the Hausdorff dimension and packing dimension are Lipschitz invariant (see, for instance, \cite{Fal}), we have that $\dim_H G_{f+g} = \dim_H T_f(G_f) = \dim_H G_f$ and $\dim_P G_{f+g} = \dim_P T_f(G_f) = \dim_P G_f.$  
 \end{proof}

\begin{remark}
Since the upper and lower box dimensions and the box dimension (if it exists) are also Lipschitz invariant (cf. \cite{Fal}), the previous lemma also holds for these fractal dimensions.
\end{remark}

It is well-known that the set of Lipschitz functions $[0,1]\to\R$, which we denote by $\mathcal{L}ip [0,1]$, is a dense subset of $\mathcal{C}[0,1]$ when the latter is endowed with the supremum norm $\|\cdot\|_{\infty}.$ We use this fact to prove the following theorem.

\begin{theorem}\label{densethm}
Let $1 \leq \beta \leq 2$. Then the set  $S_{\beta}:=\{f\in  \mathcal{C}[0,1]: \dim G_{f} = \beta\}$ is dense in $\mathcal{C}[0,1].$
\end{theorem}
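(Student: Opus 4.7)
The plan is to combine Lemma~\ref{lipdim} with the density of Lipschitz functions in $(\mathcal{C}[0,1], \|\cdot\|_\infty)$ to manufacture a dimension-$\beta$ approximant of any target $h$. Given $h \in \mathcal{C}[0,1]$ and $\epsilon > 0$, I aim to produce $f \in S_\beta$ with $\|f - h\|_\infty < \epsilon$.

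First I invoke the density of $\mathcal{L}ip [0,1]$ in $\mathcal{C}[0,1]$ to pick a Lipschitz function $\ell$ with $\|\ell - h\|_\infty < \epsilon/2$. Next, I fix once and for all a single continuous $g_\beta : [0,1] \to \R$ whose graph has dimension exactly $\beta$: for $\beta = 1$ one may take $g_\beta \equiv 0$ (this case is already immediate from Lipschitz density); for $\beta \in (1,2)$ a suitable Weierstrass-type or fractal interpolation function works; and for $\beta = 2$ I appeal to the existence results of the type cited in the introduction (e.g.\ \cite{Shen}). Because the vertical rescaling $(x,y) \mapsto (x, cy)$ is bi-Lipschitz whenever $c \ne 0$, the graph of $c g_\beta$ has the same Hausdorff, packing, and box dimensions as the graph of $g_\beta$. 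I then choose $c > 0$ small enough that $\|c g_\beta\|_\infty < \epsilon/2$, and set $f := \ell + c g_\beta$.

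By the triangle inequality, $\|f - h\|_\infty < \epsilon$. Since $\ell$ is Lipschitz and $c g_\beta$ is continuous, Lemma~\ref{lipdim} (together with the remark following it, which extends the conclusion to the box dimensions) yields
\[
\dim G_f \;=\; \dim G_{\ell + c g_\beta} \;=\; \dim G_{c g_\beta} \;=\; \beta,
\]
so $f \in S_\beta$, establishing that $S_\beta$ meets the $\epsilon$-ball around $h$.

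The main (and essentially the only non-formal) obstacle is the existence ingredient: producing, for every $\beta \in [1,2]$, a continuous function on $[0,1]$ whose graph has the prescribed fractal dimension. For box and packing dimensions this is classical via Weierstrass or fractal interpolation constructions; for Hausdorff dimension, and particularly at the endpoint $\beta = 2$, the existence must be imported from the literature cited in the introduction. Once that single ingredient is granted, the bi-Lipschitz rescaling together with Lemma~\ref{lipdim} does all the work.
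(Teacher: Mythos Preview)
Your argument is correct and is essentially the same as the paper's: approximate the target by a Lipschitz function, then add a small multiple of a fixed element of $S_\beta$ and invoke Lemma~\ref{lipdim}. You are in fact a bit more explicit than the paper in two places---the bi-Lipschitz scaling step showing $\dim G_{cg_\beta}=\beta$, and the nonemptiness of $S_\beta$---both of which the paper leaves implicit.
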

\begin{proof}
   Let $f\in \mathcal{C}[0,1].$ From the density of $\mathcal{L}ip [0,1]$ in $\mathcal{C}[0,1]$, there exists a sequence $(g_k)$ in $\mathcal{L}ip [0,1]$ which converges to $f$ uniformly. Now let $g \in \mathcal{L}ip [0,1]$ be arbitrary but fixed and fix an $h \in S_{\beta}$. We define a sequence $(f_k)$ by $f_k= g+ \frac{1}{k}h.$ Since $g$ is a Lipschitz function, Lemma \ref{lipdim} implies that $f_k \in S_{\beta}.$ With the convergence of $(f_k)$ to $g$, a basic real analysis result completes the proof.
\end{proof}   

It is known that the box dimension, Hausdorff dimension and packing dimension are also Lipschitz invariant and therefore the above theorem is also valid for these dimensions.

For the next result, we require the following definition.
\begin{definition}(\cite{Aubin})
Let $ T: (X,d_X) \to (Y,d_Y) $ be a set-valued map between two metric spaces.
\begin{enumerate}
\item $T$ is called lower semicontinuous at $x\in X$ if for any open set $U$  in $Y$ such that $ U \cap T(x) \neq \emptyset $ there exists a $\delta > 0$ satisfying $U \cap T(x') \neq \emptyset $ whenever $d_X(x,x') < \delta.$
The map $T$ is called lower semicontinuous if it is lower semicontinuous at every $x \in X.$ 
\item $T$ is said to be closed if the graph of $T$ defined by $G_T:=\{(x,y):y \in T(x)\}$ is a closed subset of $X \times Y$.
\end{enumerate}
\end{definition}

\begin{theorem}
The set-valued function $D:[1,2] \rightarrow \mathcal{C} [0,1]$ defined by 
\[
D(\beta) :=\{f \in \mathcal{C} [0,1]: \dim G_f =\beta \}=S_{\beta}
\] 
is lower semicontinuous. 
\end{theorem}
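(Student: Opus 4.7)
The plan is to reduce the statement directly to the density result of Theorem \ref{densethm}. I need to show: for any $\beta \in [1,2]$ and any open $U \subset \mathcal{C}[0,1]$ with $U \cap D(\beta) \neq \emptyset$, there exists $\delta > 0$ such that $U \cap D(\beta') \neq \emptyset$ for every $\beta' \in [1,2]$ with $|\beta - \beta'| < \delta$. So I would start by unpacking the definition of lower semicontinuity at a point $\beta \in [1,2]$ in the present setting, where the image space is $\mathcal{C}[0,1]$ endowed with the supremum norm.

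The key observation I would use is that Theorem \ref{densethm} delivers much more than the density of $S_\beta$ in $\mathcal{C}[0,1]$ for a single exponent: it shows that $S_{\beta'}$ is dense in $\mathcal{C}[0,1]$ for \emph{every} $\beta' \in [1,2]$. Consequently, every nonempty open subset of $\mathcal{C}[0,1]$ intersects $S_{\beta'} = D(\beta')$ for each $\beta' \in [1,2]$.

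Combining these two facts, fix $\beta \in [1,2]$ arbitrarily and let $U \subset \mathcal{C}[0,1]$ be open with $U \cap D(\beta) \neq \emptyset$. Then $U$ is in particular a nonempty open set, so by the density statement above, $U \cap D(\beta') \neq \emptyset$ for every $\beta' \in [1,2]$. Hence any $\delta > 0$ (for instance $\delta = 1$) fulfills the requirement of the definition, and the lower semicontinuity of $D$ at $\beta$ follows. Since $\beta$ was arbitrary, $D$ is lower semicontinuous on $[1,2]$. There is essentially no obstacle to overcome at this stage, because the whole analytic content has already been packaged into Theorem \ref{densethm}; the present theorem is a formal consequence obtained by matching the conclusion of that density theorem against the definition of lower semicontinuity.
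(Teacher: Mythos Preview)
Your argument is correct and essentially identical to the paper's: both fix $\beta$, take an open $U$ meeting $D(\beta)$, and then invoke Theorem~\ref{densethm} (density of $S_{\beta'}$ for every $\beta'\in[1,2]$) to conclude that $U$ meets $D(\beta')$ for all $\beta'$, so any $\delta>0$ works. The paper simply states this in one sentence, while you spell out the matching against the definition of lower semicontinuity more explicitly.
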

\begin{proof}
Let $\beta \in [1,2]$ and $U$ be any open set such that $D(\beta) \cap U \ne \emptyset.$ Since $S_{\beta}$ is dense in $\mathcal{C} [0,1]$, we obtain $D(\alpha) \cap U \ne \emptyset, ~\forall \alpha \in [1,2],$ establishing the proof.
\end{proof}

\begin{remark}
The set-valued map $D$ is not closed. If we choose a sequence of polynomials $(p_n)$ converging to a Weierstrass-type nowhere differentiable function $f$ with Hausdorff dimension $>1$ (for examples of such functions, see, e.g., \cite{RD}) then $(1, p_n) \in G_D$ and $(1, p_n) \rightarrow (1,f)$ but $\dim_H(G_f) > 1.$ Therefore, we deduce that $G_D$ is not closed.   
\end{remark}

The following result is well-known in analysis but repeated for the sake of completeness.      
\begin{theorem}[\cite{Rudin}]\label{Rudinthm}
             Let $\big(f_n\big)$ be a sequence of differentiable functions  on $[0,1]$. Assume that the sequence $\big(f_n(x_0)\big)$ converges for some $x_0 \in [0,1].$ If $(f'_n)$ converges uniformly on $[0,1],$ then $\big(f_n\big)$ converges uniformly on $[0,1]$ to a function $f$, and
             $$f'(x)=\lim_{n \to \infty }f'_n(x),$$ for every $x \in [0,1].$
             \end{theorem}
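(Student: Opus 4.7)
The plan is to follow the classical Rudin-style strategy in two stages: first establish that $(f_n)$ is uniformly Cauchy on $[0,1]$ so that it converges uniformly to some limit $f$, and then verify that $f$ is differentiable on $[0,1]$ and identify its derivative via an auxiliary family of difference quotients. Both stages are driven by a single tool, the mean value theorem applied to the differences $f_n - f_m$.

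For the uniform convergence stage, I would fix $\varepsilon > 0$ and choose $N$ so large that both $|f_n(x_0) - f_m(x_0)| < \varepsilon/2$ and $\sup_{[0,1]}|f'_n - f'_m| < \varepsilon/2$ whenever $n,m \geq N$; the first bound is available because $(f_n(x_0))$ is Cauchy by hypothesis, the second because $(f'_n)$ converges uniformly. Applying the mean value theorem to the differentiable function $f_n - f_m$ on the interval with endpoints $x_0$ and an arbitrary $x \in [0,1]$ gives
\[
\bigl|(f_n - f_m)(x) - (f_n - f_m)(x_0)\bigr| \leq |x - x_0| \cdot \sup_{[0,1]} |f'_n - f'_m|.
\]
Combining this with the triangle inequality and $|x - x_0| \leq 1$ yields $\|f_n - f_m\|_\infty \leq \varepsilon$, so $(f_n)$ is uniformly Cauchy and converges uniformly to a continuous limit $f$ on $[0,1]$.

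For the differentiability stage, fix $x \in [0,1]$ and introduce the auxiliary functions
\[
\phi_n(t) := \frac{f_n(t) - f_n(x)}{t - x}, \qquad t \in [0,1]\setminus\{x\},
\]
so that $\lim_{t \to x} \phi_n(t) = f'_n(x)$ for each $n$. A second application of the mean value theorem, now to $f_n - f_m$ on the interval with endpoints $x$ and $t$, produces $|\phi_n(t) - \phi_m(t)| \leq \sup_{[0,1]} |f'_n - f'_m|$; hence $(\phi_n)$ is uniformly Cauchy on $[0,1]\setminus\{x\}$ and converges uniformly there to $\phi(t) := \bigl(f(t) - f(x)\bigr)/(t-x)$.

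The main (though standard) subtlety is the interchange of limits at the end. Since $\phi_n \to \phi$ uniformly on $[0,1]\setminus\{x\}$ and each iterated limit $\lim_{t\to x}\phi_n(t) = f'_n(x)$ exists, the classical interchange-of-limits theorem from real analysis gives
\[
\lim_{t \to x} \phi(t) = \lim_{n \to \infty} f'_n(x).
\]
By definition $\lim_{t\to x}\phi(t) = f'(x)$, so $f$ is differentiable at $x$ with $f'(x) = \lim_n f'_n(x)$, as required. The two potential pitfalls I would watch for are the unambiguous use of the mean value theorem (requiring $f_n - f_m$ to be differentiable on the whole interval, which is given) and a careful invocation of the iterated-limit theorem on the punctured domain $[0,1]\setminus\{x\}$, of which $x$ is a limit point.
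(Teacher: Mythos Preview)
Your argument is correct and is precisely the classical proof from Rudin (Theorem~7.17 in \emph{Principles of Mathematical Analysis}). Note, however, that the paper does not supply its own proof of this statement at all: it is merely quoted as a well-known result with a citation to \cite{Rudin}, so there is no in-paper argument to compare against. Your write-up faithfully reproduces the standard two-stage mean-value-theorem approach from the cited source.
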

             Note that if $f$ is a continuously differentiable function, then $\dim(G_{f})=1.$ However, we cannot say anything about the dimension of its derivative. For example, take a Weierstrass-type nowhere differentiable continuous function $g:[0,1] \to \mathbb{R}$ as in, for instance \cite{Shen}, with $1\le  \dim G_{g}\le  2$. Then the function $f$ defined by $f(x)=\int\limits_{0}^{x} g(t)dt$ satisfies the following conditions: $\dim G_{f} =1$ and $1\le  \dim G_{f'} =\dim G_{g}\le 2.$ Moreover, we emphasize the fact that functions $f$ defined by an integral formula are always absolutely continuous. Hence, for such functions $f$ we have $\dim G_{f} =1$.
             \begin{theorem}\label{mainthm}
              Suppose $f$ is a continuously differentiable function with $\dim G_{f'} =\beta$ for some $1 \le \beta \le 2.$ Then there exists a sequence of continuously differentiable functions $(f_n)$ satisfying $\dim G_{f_n'} =\beta$, and $(f_n)$ converges uniformly to $f$. 
             \end{theorem}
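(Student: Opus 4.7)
The plan is to reduce the theorem to a density statement about $\mathcal{C}[0,1]$ that we have already proved. Since $f$ is continuously differentiable on $[0,1]$, its derivative $g := f'$ is a continuous function on $[0,1]$ with $\dim G_{g} = \beta \in [1,2]$. I would like to approximate $g$ by continuous functions whose graphs still have fractal dimension $\beta$ and then antidifferentiate.

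The key input is Theorem \ref{densethm}: the set $S_{\beta} = \{h \in \mathcal{C}[0,1] : \dim G_{h} = \beta\}$ is dense in $\mathcal{C}[0,1]$ with respect to the uniform norm. Applying this to $g$, I obtain a sequence $(g_n) \subset S_\beta$ with $g_n \to g$ uniformly on $[0,1]$ and $\dim G_{g_n} = \beta$ for every $n$. I then define
\[
f_n(x) := f(0) + \int_{0}^{x} g_n(t)\, dt, \qquad x \in [0,1].
\]
Because each $g_n$ is continuous, the fundamental theorem of calculus gives $f_n \in \mathcal{C}^1[0,1]$ with $f_n' = g_n$, so that $\dim G_{f_n'} = \dim G_{g_n} = \beta$ as required.

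It remains to verify the uniform convergence $f_n \to f$. Since $f$ is continuously differentiable, $f(x) = f(0) + \int_0^x g(t)\, dt$, and therefore
\[
|f_n(x) - f(x)| \le \int_{0}^{x} |g_n(t) - g(t)|\, dt \le \|g_n - g\|_\infty
\]
for every $x \in [0,1]$. Hence $\|f_n - f\|_\infty \le \|g_n - g\|_\infty \to 0$, which completes the argument. (Alternatively, one can invoke Theorem \ref{Rudinthm} directly, since $(f_n')=(g_n)$ converges uniformly and $f_n(0)=f(0)$ for all $n$.)

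There is no real obstacle here beyond recognising that the problem is genuinely a question about approximating the derivative while keeping the graph dimension pinned, which is exactly what Theorem \ref{densethm} provides. The only minor point to check is that integration from a continuous approximant really does yield a $\mathcal{C}^1$ function whose derivative is the approximant itself, and that uniform convergence is transferred from derivatives to the primitives by the trivial $L^\infty$ estimate above.
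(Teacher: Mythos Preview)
Your proof is correct and follows essentially the same route as the paper: apply Theorem \ref{densethm} to $f'$ to get $(g_n)\subset S_\beta$, integrate, and pass the uniform convergence through the integral. Your version is in fact slightly cleaner: you include the constant $f(0)$ in the definition of $f_n$ (the paper omits it, so its $f_n$ actually converge to $f-f(0)$), and you verify $f_n\to f$ by the elementary estimate $\|f_n-f\|_\infty\le\|g_n-g\|_\infty$ rather than invoking Theorem \ref{Rudinthm}.
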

             \begin{proof}
             From Theorem \ref{densethm} we obtain a sequence of continuous functions $(g_n)$ with $\dim G_{g_n} =\beta$, which converges uniformly to $f'$. 
             Define a function $f_n:[0,1] \to \mathbb{R}$ by $f_n(x)=\int\limits_{0}^{x} g_n(t)dt.$ Then, $f_n'=g_n$ and $(f_n')$ converges to $f'.$ Moreover, one verifies that the sequence $\big(f_n(0)\big)$ converges to zero. In view of Theorem \ref{Rudinthm}, the sequence $(f_n)$ converges uniformly to $f$ with the required condition $\dim G_{f_n'} =\beta$.
             \end{proof}
      \begin{remark}
      The above theorem can be extended as follows.
      Suppose $f$ is a $k-$times continuously differentiable function with $\dim G_{ f^{(k)} } =\beta$ for some $1 \le \beta \le 2.$ Then there exists a sequence of $k-$times continuously differentiable functions $(f_n)$ satisfying $\dim G_{ f_n^{(k)} } =\beta$, which converges uniformly to $f$.
      \end{remark}
      The next theorem deals with both dimension preserving and shape preserving approximation of a continuous function.
      \begin{theorem}\label{mainthm1}
                    Suppose $f$ is a continuously differentiable function with $\dim G_{f'} =\beta$ for some $1 \le \beta \le 2$ and $f(x)\ge 0, \forall x \in [0,1].$ Then there exists a sequence of continuously differentiable functions $(f_n)$ satisfying $\dim G_{f_n'} =\beta$ and $f_n(x)\ge 0, \forall x \in [0,1]$, and $(f_n)$ converges uniformly to $f$. 
      \end{theorem}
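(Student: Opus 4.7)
The plan is to reduce Theorem \ref{mainthm1} to Theorem \ref{mainthm} via a simple vertical translation. The observation is that adding a constant to a function does not change its derivative, hence does not change the graph of the derivative, so it leaves $\dim G_{f_n'}$ untouched. On the other hand, adding a small positive constant can easily push a near-approximation of a non-negative function above zero.

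More concretely, I would first apply Theorem \ref{mainthm} to the given $f$ to obtain a sequence $(g_n)$ of continuously differentiable functions with $\dim G_{g_n'}=\beta$ such that $g_n\to f$ uniformly on $[0,1]$. Set $\varepsilon_n := \|g_n-f\|_\infty$, so $\varepsilon_n\to 0$, and define
\[
f_n := g_n + \varepsilon_n.
\]
Three properties need checking. First, since $|g_n(x)-f(x)|\le \varepsilon_n$ for every $x\in[0,1]$, we have $g_n(x)\ge f(x)-\varepsilon_n$, hence $f_n(x)\ge f(x)\ge 0$, giving the non-negativity. Second, $\|f_n-f\|_\infty \le \|g_n-f\|_\infty + \varepsilon_n = 2\varepsilon_n \to 0$, so $(f_n)$ converges uniformly to $f$. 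Third, $f_n' = g_n'$ pointwise, so $G_{f_n'} = G_{g_n'}$ and thus $\dim G_{f_n'} = \beta$.

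There is no real obstacle here; the argument rests on the fact that the constraint "$f\ge 0$" is invariant under upward translation, while the dimension constraint is placed on the derivative, which is invariant under any vertical translation. If a shape-preserving analogue were formulated for properties sensitive to translations of the function itself (for instance, interpolation at prescribed heights), a similar trick would not suffice and one would need to perturb $g_n$ more carefully while controlling $\dim G_{g_n'}$; but for pointwise non-negativity, the constant shift by $\varepsilon_n$ is all that is required.
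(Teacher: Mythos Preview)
Your argument is correct. The paper omits its own proof, saying only that it ``uses arguments similar to those given in Theorems \ref{densethm} and \ref{mainthm}''; your reduction to Theorem \ref{mainthm} via the constant shift $f_n=g_n+\|g_n-f\|_\infty$ is a clean realization of that hint, and arguably the simplest one, since it treats Theorem \ref{mainthm} as a black box rather than reopening its construction.
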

      \begin{proof}
      The proof uses arguments similar to those given in Theorems \ref{densethm} and \ref{mainthm}, and is omitted.
      \end{proof}
      
      \subsection{Construction of dimension preserving approximants}
Hutchinson constructed parametrized curves in \cite{H} and Barnsley \cite{MF1} used iterated function system (IFSs) to define a class of functions called fractal interpolation functions (FIFs). A FIF is a continuous function whose graph is the (attractor) invariant set of a suitably chosen IFS. For the benefit of the reader, we briefly revisit the construction of a fractal interpolation function. For material about IFSs and FIFs, we refer the interested reader to, e.g., \cite{MF5,PM1}.

To this end, let $(X,d_X)$ be a complete metric space and let $f:X\to X$. The map $f$ is said to be a \emph{contraction (on $X$)} if $\Lip (f) < 1$.

\begin{definition}
Let $(X,d_X)$ be a complete metric space and let $\cF:=\{f_1, \ldots, f_n\}$ be a finite set of contractions on $X$. Then the pair $(X,\cF)$ is called an iterated function system on $X$.
\end{definition}

\begin{definition}
A nonempty compact subset $K$ of $X$ is called an invariant set or an attractor of the IFS $(X, \cF)$ if it satisfies the self-referential equation
\be\label{self}
K = \bigcup_{i=1}^n f_i (K).
\ee
\end{definition}
\noindent
It can be shown that if such a set $K$ exists, it is unique.

      Let a set of interpolation points $\{(x_i,y_i) : i=0,1,2,...,N\} \subset\R^2$ with increasing abscissae $0 =: x_0 < x_1<x_2< \dots<x_N:=1$ be given. Set $J := \{1,2,...,N-1,N\}$, $I :=[0,1]$ and $I_i := [x_{i-1}, x_{i}]$, $i\in J.$  Let $L_i: I \to I_n$ be affine functions such that $L_i(x_0)=x_{i-1}$ and $L_i(x_N)=x_{i}$ for $i \in J$. Suppose that $F_i: I \times \mathbb{R} \to \mathbb{R}$ are functions that are continuous in the first variable and contractive in the second variable such that
\be\label{Fn}
F_i(x_0,y_0)=y_{i-1}, \quad F_i(x_N,y_N) =y_{i}.
\ee
      Define $$w_i(x,y) := \Big(L_i(x),F_i(x,y) \Big), \quad i \in J,$$
      and consider the IFS $\mathcal{W}=(I \times \mathbb{R}, w_i: i \in J).$
      
      \begin{theorem}[\cite{MF1}] \label{BT1}
      Let $\mathcal{W}$ be the IFS defined above. Then $\mathcal{W}$ has a unique attractor $G= G_f$ which is the graph of a continuous function $f: I \to \mathbb{R}$. Moreover, $f$ interpolates the data set $\{(x_i,y_i) : i\in J\} $, that is, $f(x_i)=y_i$ for all $i\in\{0,1,\ldots, N\}.$
      \end{theorem}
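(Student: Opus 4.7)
The plan is to follow the classical Read--Bajraktarević approach: rather than attack the IFS attractor directly, I would realize $f$ as the unique fixed point of a contraction on a suitable function space, then check a posteriori that its graph satisfies the self-referential equation \eqref{self}.

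First, I would introduce the complete metric space
\[
\mathcal{C}^*:=\bigl\{g\in\mathcal{C}(I,\R): g(x_0)=y_0,\ g(x_N)=y_N\bigr\},
\]
equipped with the supremum metric (a closed subset of $(\mathcal{C}(I,\R),\|\cdot\|_\infty)$, hence complete). On $\mathcal{C}^*$ I would define the Read--Bajraktarević operator
\[
(Tg)(x):=F_i\bigl(L_i^{-1}(x),\,g(L_i^{-1}(x))\bigr),\qquad x\in I_i,\ i\in J.
\]
The first technical point is to show $Tg\in\mathcal{C}^*$: on each subinterval $I_i$, $Tg$ is continuous because $L_i^{-1}$, $g$, and $F_i(\cdot,\cdot)$ are continuous in the first variable; at an interior joint $x_i$ one approaches from $I_i$ (where $L_i^{-1}(x_i)=x_N$) and from $I_{i+1}$ (where $L_{i+1}^{-1}(x_i)=x_0$), and the join conditions \eqref{Fn} $F_i(x_N,y_N)=y_i=F_{i+1}(x_0,y_0)$, together with $g(x_0)=y_0$ and $g(x_N)=y_N$, force the two one-sided limits to agree. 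The endpoint conditions are checked the same way.

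Next I would show $T$ is a contraction. Let $c:=\max_{i\in J}\sup_{x\in I,\,y\neq y'}\frac{|F_i(x,y)-F_i(x,y')|}{|y-y'|}<1$, which is the assumed second-variable contractivity of the $F_i$. Then for $g,h\in\mathcal{C}^*$ and $x\in I_i$,
\[
|(Tg)(x)-(Th)(x)|=\bigl|F_i(L_i^{-1}(x),g(L_i^{-1}(x)))-F_i(L_i^{-1}(x),h(L_i^{-1}(x)))\bigr|\le c\,\|g-h\|_\infty,
\]
so $\|Tg-Th\|_\infty\le c\|g-h\|_\infty$. Banach's fixed point theorem then furnishes a unique $f\in\mathcal{C}^*$ with $Tf=f$, i.e.
\[
f(x)=F_i\bigl(L_i^{-1}(x),f(L_i^{-1}(x))\bigr)\qquad\forall\,x\in I_i,\ i\in J.
\]

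The interpolation property now falls out: $f\in\mathcal{C}^*$ already gives $f(x_0)=y_0$ and $f(x_N)=y_N$, and evaluating the fixed-point identity at $x=x_i$ via $I_i$ yields $f(x_i)=F_i(x_N,f(x_N))=F_i(x_N,y_N)=y_i$. Finally, I would translate the fixed-point identity into the self-similarity of $G_f$: for any $(x,f(x))\in G_f$ and any $i\in J$, the point $w_i(x,f(x))=(L_i(x),F_i(x,f(x)))$ equals $(L_i(x),f(L_i(x)))\in G_f$, and conversely every point $(u,f(u))\in G_f$ with $u\in I_i$ is the image under $w_i$ of $(L_i^{-1}(u),f(L_i^{-1}(u)))\in G_f$. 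Hence $G_f=\bigcup_{i\in J}w_i(G_f)$, so $G_f$ is an invariant compact set of $\mathcal W$. Uniqueness of the attractor (from the Hutchinson--Banach theorem on the hyperspace of nonempty compacta with the Hausdorff metric, which applies once one notes that each $w_i$ is a contraction in a suitable remetrization of $I\times\R$) then identifies $G_f$ with $G$.

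The main obstacle is verifying the matching of one-sided limits of $Tg$ at the interior nodes $x_i$: this is where the hypotheses \eqref{Fn} together with the boundary constraints built into $\mathcal{C}^*$ conspire, and it is essential both for $T$ to be well-defined and for the interpolation conclusion. Everything else is a routine application of Banach's fixed point theorem and a bookkeeping verification of the self-referential equation.
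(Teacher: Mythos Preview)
The paper does not supply its own proof of Theorem~\ref{BT1}; it is quoted from \cite{MF1} and stated without argument. Your Read--Bajraktarevi\'c approach is correct and is essentially the proof given in the cited literature (see also \cite{PM1}), so there is nothing to compare against within this paper itself.
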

The function $f$ in the above theorem whose graph is the attractor of an IFS is termed a fractal interpolation function. Main features of FIFs are that their graphs are self-referential in the sense of \eqref{self} and that they usually have non-integral box or Hausdorff dimension.

For a special choice of mappings $F_i$, namely, $F_i (x,y) := c_i x + d_i + \alpha_i y$, where the coefficients $c_i$ and $d_i$ are determined by the conditions \eqref{Fn}, and the $\alpha_i \in (-1,1)$ are free parameters, the resulting FIF is called \emph{affine}.

Estimates for the Hausdorff dimension of an affine FIF were presented in \cite{MF1} and also in \cite{Falc2}. The box dimension of classes of affine FIFs was computed in \cite{MF2,MF6,Hardin} and for FIFs generated by bilinear maps in \cite{MF4}. In \cite{HM}, a formula for the box dimension of FIFs $\R^n\to\R^m$ was derived.

In \cite{M2,M1,VCN}, the idea of fractal interpolation was explored further leading to a class of fractal functions associated with a given (classical) function $f\in \mathcal{C}(I)$ as follows. (See also, \cite{PM1} for a similar approach.) 

Let $\Delta:=(x_0,x_1, \dots,x_N)$ be a partition of $I :=[0,1]$ such that, without loss of generality,  $0=x_0<x_1<\dots< x_N=1$. For $i \in J$,  let $L_i :I\to I_i$ be affine (see above) and $F_i : I\times\R\to\R$ be given by
\[
F_i(x,y) := \alpha_iy + f \big( L_i(x) \big)-\alpha_i b(x),
\]
where $b \neq f $ is any continuous function satisfying
\[
b(x_0) = f(x_0), \quad b(x_N) = f(x_N),
\]
and $\alpha:= (\alpha_1, \alpha_2, \dots, \alpha_N) \in (-1,1)^N$. The corresponding FIF, denoted by $f_{\Delta,b}^\alpha$, is called an \emph{$\alpha$-fractal function}. In \cite{M1}, it is noted that $\alpha$-fractal functions satisfy the self-referential equation
      \begin{equation}\label{eqn2}
      f_{\Delta,b}^\alpha(x) = f(x) + \alpha_i (f_{\Delta,b}^{\alpha}- b)\big(L_i^{-1}(x)\big), \quad\forall~~ x \in I_i,~~ i \in J.
      \end{equation}
The following result is a special case of Theorem 3 in \cite{MF6} applied to Lipschitz functions. (See, also \cite[Corollary 5.1]{Akhtar}.)
      \begin{theorem}\label{Akhtar}
       Let $\Delta=(x_0,x_1,\dots , x_N)$ be a partition of $I=[x_0,x_N]$ satisfying $x_0<x_1< \dots < x_N$ and let $ \alpha =(\alpha_1,\alpha_2, \dots ,\alpha_{N}) \in (-1,1)^N.$ Assume that $f$ and $b$ are Lipschitz functions defined on $I$ with $b(x_0)=f(x_0)$ and $b(x_N)=f(x_N).$ If the data points $ \{(x_i, f(x_i)): i =0,1 \dots, N\}$ are not collinear, then
      \begin{equation*}
      \dim_B G_{f_{\Delta,b}^\alpha} =
                         \begin{cases} D,  \text{ if $\sum\limits_{i=1}^{N} |\alpha_i| > 1$};\\
                            1, \text{ otherwise,}
                        \end{cases}
                          \end{equation*}
      where $D$ is the unique positive solution of $\sum\limits_{i=1}^{N} |\alpha_i|a_i^{D-1}=1$. Here, $G_{f_{\Delta,b}^\alpha}$ denotes the graph of $f_{\Delta,b}^\alpha$.
      \end{theorem}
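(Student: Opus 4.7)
The plan is to use the self-referential equation \eqref{eqn2} to derive sharp bounds on the oscillation of $f_{\Delta,b}^\alpha$ over the cylinder intervals generated by the base IFS, and then convert these into a box count via a standard Moran-type argument. The dichotomy in the statement corresponds to whether the amplification by the vertical contractions $\alpha_i$ dominates the smoothing effect of the affine maps $L_i$.

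For the easy case $\sum_{i=1}^{N} |\alpha_i| \leq 1$, I would show directly that $f_{\Delta,b}^\alpha$ is itself Lipschitz. Consider the Read--Bajraktarevic operator $T$ on $\mathcal{C}(I)$ underlying the FIF, defined by $(Tg)(x) := f(x) + \alpha_i\,(g-b)(L_i^{-1}(x))$ for $x \in I_i$. Restricting $T$ to the closed subspace of functions with Lipschitz constant at most $M$, and using the Lipschitz hypothesis on $f$ and $b$ together with the contractivity $\Lip(L_i^{-1}) = a_i^{-1}$, one can check that $T$ is invariant on this subspace once $M$ is chosen sufficiently large, precisely because $\sum_i |\alpha_i| \leq 1$. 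The unique fixed point $f_{\Delta,b}^\alpha$ is therefore Lipschitz and its graph has box dimension $1$.

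For the case $\sum_{i=1}^{N} |\alpha_i| > 1$, define for each word $\sigma = (\sigma_1,\ldots,\sigma_k) \in J^k$ the cylinder $I_\sigma := (L_{\sigma_1}\circ\cdots\circ L_{\sigma_k})(I)$ of length $a_\sigma := a_{\sigma_1}\cdots a_{\sigma_k}$, and let $R_\sigma$ denote the oscillation of $f_{\Delta,b}^\alpha$ on $I_\sigma$. Iterating \eqref{eqn2} and using the Lipschitz bound on $f - \alpha_{\sigma_j} b\circ L_{\sigma_j}^{-1}$ yields the upper estimate
\[
R_\sigma \;\leq\; \Bigl(\prod_{j=1}^k |\alpha_{\sigma_j}|\Bigr) R_\emptyset \;+\; C\sum_{j=1}^{k}\Bigl(\prod_{\ell < j}|\alpha_{\sigma_\ell}|\Bigr) a_{\sigma_1}\cdots a_{\sigma_{j-1}},
\]
whose leading behaviour is $\prod_{j} |\alpha_{\sigma_j}|$ in the supercritical regime. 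A standard box count covers the portion of $G_{f_{\Delta,b}^\alpha}$ over $I_\sigma$ by $O(R_\sigma/a_\sigma + 1)$ squares of side $a_\sigma$, so summing over $\sigma \in J^k$ gives $N_{\delta_k}(G_{f_{\Delta,b}^\alpha}) \asymp \sum_{\sigma \in J^k}\prod_j |\alpha_{\sigma_j}|\,a_{\sigma_j}^{-1}$. The Moran equation $\sum_i |\alpha_i|\,a_i^{D-1} = 1$ then identifies this sum as scaling like $\delta_k^{-D}$, yielding $\overline{\dim}_B = D$.

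The main obstacle is the matching lower bound on $R_\sigma$, which is where the non-collinearity assumption on the interpolation data is essential. Without it, the term $f - \alpha_{\sigma_j} b\circ L_{\sigma_j}^{-1}$ in \eqref{eqn2} could systematically cancel with the rescaled copy of $f_{\Delta,b}^\alpha$, collapsing the oscillation and forcing the graph to remain Lipschitz even when $\sum_i|\alpha_i|>1$. The argument has to extract a uniform positive lower bound on the base oscillation from non-collinearity and then propagate it through the recursion without cancellation; once that is in hand, $\underline{\dim}_B \geq D$ follows by the same counting and coincides with the upper bound.
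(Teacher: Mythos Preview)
The paper does not actually prove this theorem: it is quoted as a special case of Theorem~3 in \cite{MF6} (with a pointer also to \cite[Corollary~5.1]{Akhtar}), so there is no in-paper argument to compare your proposal against. Your oscillation/Moran scheme for the supercritical case is precisely the standard route taken in those references, and your identification of the non-collinearity hypothesis as the source of the lower oscillation bound is correct.

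Your treatment of the subcritical case $\sum_i |\alpha_i|\le 1$, however, contains a genuine error. The Read--Bajraktarevi\'c operator does \emph{not} preserve a Lipschitz ball under this hypothesis alone: on $I_i$ the composition with $L_i^{-1}$ contributes a factor $a_i^{-1}$ to the Lipschitz constant, so invariance of $\{g:\Lip(g)\le M\}$ would require $|\alpha_i|\,a_i^{-1}<1$ for every $i$, i.e.\ $|\alpha_i|<a_i$, which is strictly stronger than $\sum_i|\alpha_i|\le 1$. For instance, with $N=2$, $a_1=a_2=\tfrac12$, $\alpha_1=0.9$, $\alpha_2=0.05$, one has $\sum_i|\alpha_i|=0.95<1$ but $|\alpha_1|>a_1$; iterating \eqref{eqn2} along the word $1^k$ shows the oscillation of $f_{\Delta,b}^\alpha$ on $[0,2^{-k}]$ is of order $0.9^k$, so the local Lipschitz constant blows up like $1.8^k$ and the fixed point is not Lipschitz. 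The correct way to obtain $\dim_B=1$ in this regime is to run the \emph{same} oscillation recursion you set up for the supercritical case: summing the upper bound for $R_\sigma$ over all $\sigma\in J^k$ the leading term is $(\sum_i|\alpha_i|)^k R_\emptyset$, which stays bounded when $\sum_i|\alpha_i|\le 1$, so the box count $\sum_\sigma (R_\sigma/a_\sigma+1)$ is dominated by the ``$+1$'' contribution and one gets $\overline{\dim}_B\le 1$; the lower bound $\ge 1$ is automatic for a graph.
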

      
      \begin{note}\label{note1}
      We define the second modulus of smoothness with step-weight function $\phi(x):=\sqrt{x(1-x)}$ by 
      $$\omega_{\phi}(f;\delta)=\sup_{0 \le t \le \delta}\sup_{x}|f(x-t\phi(x))- 2 f(x) + f(x+t \phi(x))|,$$
      where the second supremum is taken over those values of $x$ for which every argument belongs to the domain $[0,1].$ In \cite{Totik} the following estimate was  proved:
\[
\|B_n(f) -f\|_{\infty} \le C ~ \omega_{\phi}\Big(f; \frac{1}{\sqrt{n}}\Big),
\]
      for some constant $C>0$. Here, $B_n:\cC(I)\to \Pi_n$ denotes the $n$-th order Bernstein operator and $\Pi_n$ the space of polynomials of degree $\leq n$.
      \end{note}
      Now we are ready to prove the next result.
      \begin{theorem}\label{mainthm3}
      Let $f \in \mathcal{C}(I)$ and $\beta \in (1,2)$. Then there exists a sequence $(f_n)$ of fractal functions converging uniformly to $f$ and $\dim_B G_{f_n} = \beta.$
      \end{theorem}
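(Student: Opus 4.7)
The plan is to approximate $f$ by the Bernstein polynomials $p_n:=B_n(f)$, which are Lipschitz and satisfy $\|p_n-f\|_{\infty}\to 0$ by Note \ref{note1}, and then to replace each $p_n$ by a fractal perturbation $f_n$ whose box dimension is forced to equal $\beta$ through the choice of partition and scaling vector. Uniform convergence $f_n\to f$ will then follow from the triangle inequality combined with the standard contraction estimate derived from the self-referential equation \eqref{eqn2}.

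To control the dimension, fix an integer $N\ge 2$ and take the uniform partition $\Delta=(0,1/N,2/N,\ldots,1)$, so $a_i=1/N$ for every $i\in J$. Set $\alpha_i:=s:=N^{\beta-2}$ for each $i$. Since $\beta\in(1,2)$, we have $s\in(0,1)$ and $\sum_{i=1}^{N}|\alpha_i|=N^{\beta-1}>1$; substitution into the defining equation $\sum_{i=1}^{N}|\alpha_i|a_i^{D-1}=1$ yields the unique positive solution $D=\beta$. Fix, once and for all, a Lipschitz function $\psi:[0,1]\to\R$ with $\psi(0)=\psi(1)=0$ such that the points $\{(x_i,\psi(x_i))\}_{i=0}^{N}$ are non-collinear, and take as germ $g_n:=p_n+\tfrac{1}{n}\psi$ and as base $b_n:=p_n$. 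Both are Lipschitz with $g_n(x_0)=b_n(x_0)=f(0)$ and $g_n(x_N)=b_n(x_N)=f(1)$, and $g_n\neq b_n$; moreover, the data $\{(x_i,g_n(x_i))\}_{i=0}^{N}$ is non-collinear (if $\{(x_i,p_n(x_i))\}$ happens to be collinear, for instance when $f$ is affine, the perturbation $\tfrac{1}{n}\psi$ destroys collinearity by the choice of $\psi$; otherwise non-collinearity is preserved for $n$ sufficiently large by openness). Theorem \ref{Akhtar} then gives $\dim_B G_{f_n}=\beta$ for $f_n:=(g_n)_{\Delta,b_n}^{\alpha}$.

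For the uniform convergence, the self-referential equation \eqref{eqn2} yields, upon taking sup norms and rearranging, the standard bound
\[
\|f_n-g_n\|_{\infty}\le\frac{s}{1-s}\,\|g_n-b_n\|_{\infty}=\frac{s}{n(1-s)}\,\|\psi\|_{\infty}\longrightarrow 0,
\]
while $\|g_n-f\|_{\infty}\le\|p_n-f\|_{\infty}+\tfrac{1}{n}\|\psi\|_{\infty}\to 0$; the triangle inequality $\|f_n-f\|_{\infty}\le\|f_n-g_n\|_{\infty}+\|g_n-f\|_{\infty}$ then finishes the proof. The main subtlety is the non-collinearity hypothesis of Theorem \ref{Akhtar}, which the fixed, non-affine perturbation $\tfrac{1}{n}\psi$ is designed precisely to enforce; the remaining ingredients are the uniform convergence of the Bernstein polynomials and the well-known contractive bound for $\alpha$-fractal functions.
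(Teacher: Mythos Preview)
Your argument follows the same strategy as the paper's: approximate $f$ by Bernstein polynomials $p_n$, build $\alpha$-fractal perturbations with partition and scale vector chosen so that Theorem~\ref{Akhtar} forces box dimension $\beta$, and control $\|f_n-f\|_\infty$ via the contractive estimate coming from \eqref{eqn2}. The paper makes slightly different concrete choices --- it fixes $N=2$, uses $p_n$ itself as germ with base $b_n=B_n(p_n)$, and dismisses non-collinearity with a ``without loss of generality'' on the $f$-data --- whereas you take germ $g_n=p_n+\tfrac1n\psi$ and base $b_n=p_n$. Your variant is arguably cleaner for the convergence step, since $\|g_n-b_n\|_\infty=\tfrac1n\|\psi\|_\infty$ is explicit, while the paper must still argue that $\omega_\phi(p_n;1/\sqrt n)\to 0$.

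One small caveat: your non-collinearity argument is not quite airtight. In the case where the $p_n$-data is non-collinear but the limiting $f$-data \emph{is} collinear (which can happen for non-affine $f$ whose values at the nodes happen to lie on a line), both $p_n$ and the perturbation $\tfrac1n\psi$ move with $n$, and openness alone does not exclude the $g_n$-data being collinear for some particular $n$. The fix is immediate and does not disturb the rest of the proof: for each fixed $n$ the collinearity defect of $p_n+t\psi$ is a non-constant affine function of $t$ (non-constant because the $\psi$-data is non-collinear), so replace $\tfrac1n$ by any $\epsilon_n\in(0,\tfrac1n]$ avoiding the at most one bad value.
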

      \begin{proof}
        For a given $f \in \mathcal{C}(I)$ and $ \beta \in (1,2)$, we choose the partition $\Delta= (0, \frac12, 1)$ of $I=[0,1]$ and a scale vector $ \alpha =(\alpha_1,\alpha_2) \in (-1,1)^2$ by
\[
\alpha_1=\alpha_2 \quad\text{and}\quad\beta = 2+\frac{\log (|\alpha_1|)}{\log 2}.
\]
Further assume, without loss of generality, that the sampling points in $\big\{\big(x_i,f(x_i)\big): i=0,1,2\big\}$ corresponding to $f$ are not collinear. Let $(p_n)_{n \in \mathbb{N}}$ be the sequence of Bernstein polynomials $p_n = B_n(f)$ that converges uniformly to $f$. For each fixed $n \in \mathbb{N}$, construct the $\alpha$-fractal function $(p_n)_{\Delta, B_n(p_n)}^\alpha$ corresponding to $p_n$ by choosing the parameter function $b$ (see above) as $B_n(p_n)$. In the light of Equation \eqref{eqn2} and Note \ref{note1}, a simple and straightforward calculation produces
       \begin{equation*}
       \begin{split}
       \| f- (p_n)_{\Delta, B_n(p_n)}^\alpha\| _\infty \le &~ \|f-p_n \|_\infty + \|p_n - (p_n)_{\Delta, B_n(p_n)}^\alpha\|_\infty \\
       \le &~ \|f -p_n \|_\infty + \frac{|\alpha_1|}{1-|\alpha_1|} \| p_n - B_n(p_n)\|_\infty\\
       \le &~  C ~ \omega_{\phi}\Big(f; \frac{1}{\sqrt{n}}\Big) + \frac{C ~|\alpha_1|}{1-|\alpha_1|}  ~ \omega_{\phi}\Big(p_n; \frac{1}{\sqrt{n}}\Big).
       \end{split}
       \end{equation*}
We therefore conclude that the sequence $(p_n)_{\Delta, B_n(p_n)}^\alpha $ converges uniformly to $f$. For each fixed $n \in \mathbb{N}$, the functions $p_n$ and $B_n(p_n)$ are Lipschitz continuous and the set of data points $\{(x_i, p_n(x_i)): i=0,1,2\}$ is not collinear. Hence, with the help of Theorem \ref{Akhtar}, the box dimensions of the graphs of $(p_n)_{\Delta, B_n(p_n)}^\alpha$, which depend only on the partition and the scaling vector, are all same and are equal to $\beta.$
       \end{proof}
       \begin{remark}
       The previous theorem also determines the order of approximation by fractal functions. More precisely, for a given function $f \in \mathcal{C}(I)$ and $\beta \in (1,2)$ we have the following estimate
       \[
       \|f -f_n\|_{\infty} \le C ~ \Bigg[\omega_{\phi}\Big(f; \frac{1}{\sqrt{n}}\Big) +\omega_{\phi}\Big(B_n(f); \frac{1}{\sqrt{n}}\Big)\Bigg],
       \]
       where $(f_n)$ is a sequence of fractal functions as in the above theorem and constant $C$ depends only on $\beta.$ The above order of approximation is not claimed to be the optimal. Note that  though there are many other approximation polynomials, so called Bernstein-type polynomials, we have used only the Bernstein polynomials in the previous theorem. The reader is encouraged to consult \cite{Gal} for a more detailed study on order of convergence by Bernstein-type polynomials.
       \end{remark}
       Next, we approximate a given function by a sequence of fractal functions having the same Hausdorff dimension. For this purpose, we need to quote the following result can be found in \cite{Barany2} and is based on work presented in \cite{Barany1}.
       \begin{theorem}[\cite{Barany2}, Theorem $2.1$]\label{Barany}
       Let the data set $\triangle=\{(x_i,y_i) \in I \times \mathbb{R}:i=1,2,\dots ,m\}$ be given so that $0=x_0 < x_1< \dots
       < x_m=1$. Assume that $\sum\limits_{i=1}^{m}|\alpha_i| > 1$ and that there exists an $i \ne j$ such that 
       \begin{equation}\label{condi}
       \frac{y_i -y_{i-1} -\alpha_i(y_m -y_0)}{x_i -x_{i-1}- \alpha_i} \ne \frac{y_j -y_{j-1} -\alpha_j(y_m -y_0)}{x_j -x_{j-1}- \alpha_j}.
       \end{equation} 
Let $f$ be an affine FIF associated with the above data set and denote by $G_f$ its graph. Then, $\dim_H G_f = s$ where $s$ is the unique positive solution of 
\[
\sum\limits_{i=1}^{m} |\alpha_i| (x_i -x_{i-1})^{s-1}=1.
\]
\end{theorem}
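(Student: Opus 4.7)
The plan is to treat $G_f$ as the attractor of the planar affine IFS $\mathcal{W}=\{w_i\}_{i=1}^m$ with $w_i(x,y)=(L_i(x),F_i(x,y))$, where $L_i(x)=a_ix+b_i$ has contraction ratio $a_i=x_i-x_{i-1}\in(0,1)$ and $F_i(x,y)=\alpha_i y+c_ix+d_i$, so that the linear part of $w_i$ is the lower-triangular matrix with diagonal entries $a_i$ and $\alpha_i$. Since $\sum_{i=1}^m|\alpha_i|>1$, the vertical direction contracts less strongly than the horizontal one on average, and one expects the attractor to be genuinely two-dimensional in its scaling behaviour. The first step is to read off the upper bound: the number $s\in(1,2]$ solving $\sum_{i=1}^m|\alpha_i|a_i^{s-1}=1$ coincides with the box dimension computed in Theorem \ref{Akhtar} (condition \eqref{condi} guarantees the non-collinearity of the data required by that theorem), and hence $\dim_H G_f\le \overline{\dim}_B G_f = s$ is automatic.

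For the lower bound, which is the heart of the matter, I would follow the self-affine measures strategy from \cite{Barany1}. Fix a probability vector $p=(p_1,\ldots,p_m)$ and construct the self-affine Bernoulli measure $\mu_p$ on $G_f$ by pushing the product measure $p^{\N}$ on the symbolic space $\{1,\ldots,m\}^{\N}$ forward via the canonical coding map associated with $\mathcal{W}$. Because the linear parts of the $w_i$ are lower triangular, the system is fibred over the horizontal self-similar IFS $\{L_i\}_{i=1}^m$, and the vertical fibres of $G_f$ are themselves attractors of affine IFSs on $\R$ governed by the parameters $\alpha_i$. A Ledrappier--Young type decomposition should then express the local dimension of $\mu_p$ as the projected dimension on the $x$-axis plus the conditional dimension along vertical fibres; optimising over $p$ through the usual pressure equation (which singles out $p_i=|\alpha_i|a_i^{s-1}$) would produce a measure of dimension exactly $s$, and the lower bound $\dim_H G_f\ge s$ then follows from the mass distribution principle.

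The principal obstacle is ensuring that the Ledrappier--Young sum is saturated, i.e.\ that the conditional measures on vertical fibres really attain the dimension predicted by the ratio of entropy to Lyapunov exponent. This is precisely where the non-degeneracy hypothesis \eqref{condi} enters: the assertion that the $m$ ratios appearing in \eqref{condi} are not all equal prevents $G_f$ from collapsing onto the graph of a single affine function and rules out the resonances between the horizontal and vertical dynamics that would otherwise force dimension loss in the fibre direction. The remaining step would be to invoke the transversality or dimension-conservation arguments of \cite{Barany1,Barany2} to turn this algebraic non-degeneracy into a quantitative lower bound on the conditional dimension, so that the subadditive thermodynamic formalism closes the gap and yields $\dim_H G_f = s$.
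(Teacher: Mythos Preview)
The paper does not prove this theorem at all: it is quoted verbatim from \cite{Barany2} (building on \cite{Barany1}) and used as a black box to construct the dimension-preserving approximants in Theorem~\ref{mainthm4}. There is therefore no ``paper's own proof'' to compare your sketch against.

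As for the sketch itself, you have correctly identified the overall architecture of the argument in \cite{Barany1,Barany2}: the upper bound is the classical box-dimension computation for affine FIFs, and the lower bound goes through a Ledrappier--Young decomposition for self-affine Bernoulli measures, with the non-degeneracy condition~\eqref{condi} playing the role of an irreducibility/non-resonance hypothesis. However, what you have written is a plan rather than a proof. The substantive work --- showing that the fibre measures achieve full dimension, which in \cite{Barany1} relies on deep input from additive combinatorics (Hochman's inverse theorem for entropy) --- is deferred entirely to the phrase ``invoke the transversality or dimension-conservation arguments of \cite{Barany1,Barany2}.'' That is precisely the hard part, and nothing in your outline indicates how condition~\eqref{condi} is actually converted into the exponential separation or irrationality-type hypothesis those results require. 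One minor point: you appeal to Theorem~\ref{Akhtar} for the upper bound, but that theorem is stated for $\alpha$-fractal functions with Lipschitz $f$ and $b$, not for affine FIFs in the sense of Theorem~\ref{Barany}; the box-dimension upper bound for affine FIFs is of course classical (e.g.\ \cite{MF6,Hardin}), but the citation should be adjusted.
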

 Note that Theorem \ref{Barany} implies that under the condition \eqref{condi} the box dimension of $G_f$ equals its Hausdorff dimension. 
       \begin{theorem}\label{mainthm4}
       Let $f \in \mathcal{C}(I)$ and $\beta \in (1,2)$. Then there exists a sequence of fractal functions converging uniformly to $f$ with their graphs having Hausdorff dimension $\beta.$
       \end{theorem}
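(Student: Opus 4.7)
The plan is to mirror the proof of Theorem \ref{mainthm3} verbatim, merely replacing the appeal to Theorem \ref{Akhtar} by an appeal to Theorem \ref{Barany}, and to handle the additional non-degeneracy hypothesis \eqref{condi} by hand. Keep the three-point partition $\Delta=(0,\tfrac{1}{2},1)$ and the scaling vector $\alpha=(\alpha_1,\alpha_2)$ with $\alpha_1=\alpha_2$ chosen so that $\beta = 2+\log|\alpha_1|/\log 2$. With this choice, $\sum_{i=1}^{2}|\alpha_i|>1$ (since $\beta>1$) and the Moran-type equation
\[
\sum_{i=1}^{2}|\alpha_i|\,(x_i-x_{i-1})^{s-1}=1
\]
has unique solution $s=\beta$. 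So \emph{if} Theorem \ref{Barany} applies, it will deliver exactly the desired Hausdorff dimension.

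For each $n$ set $p_n:=B_n(f)$ and form the $\alpha$-fractal function $g_n:=(p_n)_{\Delta,B_n(p_n)}^{\alpha}$ as in Theorem \ref{mainthm3}. The same telescoping estimate used there shows $\|f-g_n\|_\infty\to 0$, so uniform convergence to $f$ is free. Each $p_n$ and $B_n(p_n)$ is Lipschitz, and the box dimension statement from Theorem \ref{Akhtar} already ensures $\dim_B G_{g_n}=\beta$; what remains is to upgrade this to the Hausdorff dimension.

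The only real obstacle is verifying hypothesis \eqref{condi} for the data set $\{(x_i,p_n(x_i)):i=0,1,2\}$. Since $m=2$, the unique admissible pair is $(i,j)=(1,2)$; with $x_1-x_0=x_2-x_1=\tfrac{1}{2}$ and $\alpha_1=\alpha_2=\alpha$, the two denominators in \eqref{condi} coincide and the $\alpha(y_2-y_0)$ terms in the numerators cancel, so the condition collapses to
\[
p_n(0)-2p_n\!\left(\tfrac{1}{2}\right)+p_n(1)\neq 0,
\]
i.e., the three sample points are not collinear. Assuming (without loss of generality, as in Theorem \ref{mainthm3}) that $(x_i,f(x_i))_{i=0,1,2}$ are not collinear, uniform convergence $p_n\to f$ together with the fact that $p_n(0)=f(0)$ and $p_n(1)=f(1)$ forces $p_n(0)-2p_n(\tfrac{1}{2})+p_n(1)\to f(0)-2f(\tfrac{1}{2})+f(1)\neq 0$. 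Hence non-collinearity holds for all sufficiently large $n$; for the finitely many exceptional indices one may either discard them or replace $p_n$ by $p_n+\varepsilon_n q$ with $\varepsilon_n\to 0$ and $q$ a fixed continuous function breaking collinearity at $\tfrac{1}{2}$, without affecting uniform convergence.

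With \eqref{condi} in hand, Theorem \ref{Barany} yields $\dim_H G_{g_n}=\beta$ for all $n$ large enough, and the subsequence of $(g_n)$ obtained by restricting to those indices is the required sequence of fractal functions converging uniformly to $f$ with prescribed Hausdorff dimension $\beta$.
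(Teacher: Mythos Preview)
There is a genuine gap. Theorem \ref{Barany} is stated only for \emph{affine} FIFs, i.e., those arising from maps $F_i(x,y)=c_i x + d_i + \alpha_i y$ that are affine in $x$. The $\alpha$-fractal functions $g_n=(p_n)_{\Delta,B_n(p_n)}^{\alpha}$ you build are generated by
\[
F_i(x,y)=\alpha_i y + p_n\big(L_i(x)\big)-\alpha_i\,B_n(p_n)(x),
\]
and for $n\ge 2$ the function $x\mapsto p_n(L_i(x))-\alpha_i B_n(p_n)(x)$ is a genuine polynomial of degree $n$, not an affine map. Hence $g_n$ is not an affine FIF, and Theorem \ref{Barany} simply does not apply to it. Verifying condition \eqref{condi} for the data $\{(x_i,p_n(x_i))\}$ is beside the point: that condition governs the affine FIF through those three nodes, which is a different function from $g_n$. (Theorem \ref{Akhtar} does cover the Lipschitz/non-affine case, which is why the box-dimension argument in Theorem \ref{mainthm3} goes through; but the Hausdorff-dimension result you need does not.)

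The paper avoids this by abandoning the Bernstein-polynomial construction altogether and working directly with affine FIFs. For each $n$ it takes the equispaced data $\triangle_n=\{(i/n,f(i/n)):i=0,\dots,n\}$, chooses equal scaling factors $\alpha_i=n^{\beta-2}$ so that the Moran equation gives $s=\beta$ and $\sum|\alpha_i|=n^{\beta-1}>1$, perturbs one node by $1/n$ if necessary to force \eqref{condi}, and lets $g_n$ be the resulting affine FIF. Since $\alpha_i\to 0$ and the interpolation nodes become dense, these affine FIFs converge uniformly to $f$, while Theorem \ref{Barany} now legitimately yields $\dim_H G_{g_n}=\beta$.
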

       \begin{proof}
       We consider a sequence of data set $\triangle_n=\{(x_i,f(x_i)) \in I \times \mathbb{R}:i=0,1,2,\dots ,n\}$ with $0=x_0 < x_1< \dots
              < x_n=1$ and $x_i -x_{i-1}= \frac{1}{n}.$ Choose $\alpha_i = \alpha= \frac{1}{n^{2-\beta}}$ for every $i =1,2,\dots, n.$ Then we have $s=2 +\frac{\log(|\alpha|)}{\log n}=\beta.$ Moreover, $\sum\limits_{i=1}^{n}|\alpha_i|= n |\alpha|= n^{\beta -1} > 1.$ By Theorem \ref{Barany} it suffices to show that $f(x_i) -f(x_{i-1}) \ne f(x_j) -f(x_{j-1})$, for some $i \ne j$, in order to verify condition \eqref{condi}. For each $n \ge 2$, we define a data set $\tilde{\triangle}_n$ by 
              \begin{equation*}
              \tilde{\triangle}_n=
              \begin{cases}
               \triangle_n, ~~~ \text{if} ~ f(x_1) -f(x_0) \ne f(x_n) -f(x_{n-1}) \\
               \{(x_i,y_i): i= 0,1,2,\dots ,n\}, ~~\text{otherwise},
              \end{cases}
              \end{equation*}
              where $y_0= f(x_0)+\frac{1}{n}, ~ y_i=f(x_i)$ for $i =1,2,\dots ,n.$  Finally, we obtain a sequence $(g_n)$ of fractal interpolation functions generated by the data set $\tilde{\triangle}_n$ and the aforementioned scale vector $\alpha$ converging to $f$ and satisfying the desired condition.
       \end{proof}
%       \begin{remark}
%       Combining Theorems \ref{mainthm3}, \ref{mainthm4}, \ref{mainthm} and \ref{mainthm1}, we hope that our paper will find some applications in approximation theory.
%       \end{remark}
       \section{Restrictions and Extensions of Continuous Functions}
       In this section, we focus on some restrictions and extensions of continuous functions in regards to fractal dimensions. For this purpose, we need to state some known results.
      \begin{theorem}[\cite{Fal}, Theorem $4.10$]\label{falcomp}
      Let $A \subset \mathbb{R}^n$ be a Borel set such that $ 0 < \mathcal{H}^s(A) \le \infty.$ Then there exists a compact set $K \subset A$ such that $ 0< \mathcal{H}^s(K) <\infty.$
      \end{theorem}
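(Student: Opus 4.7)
The plan is to split the argument into two cases depending on whether $\mathcal{H}^s(A)$ is finite or infinite. The finite case is a soft measure-theoretic fact, while the infinite case contains the substantive constructive content of the theorem.

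If $\mathcal{H}^s(A) < \infty$, then since $\mathcal{H}^s$ is a Borel regular outer measure, its restriction to $A$ is a finite Borel measure on the Polish space $\mathbb{R}^n$. Every finite Borel measure on a Polish space is tight, hence inner regular with respect to compact sets: for each $\varepsilon > 0$ there exists a compact $K \subset A$ with $\mathcal{H}^s(A \setminus K) < \varepsilon$. Taking $\varepsilon < \mathcal{H}^s(A)$ yields $0 < \mathcal{H}^s(K) \le \mathcal{H}^s(A) < \infty$, finishing this case.

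If $\mathcal{H}^s(A) = \infty$, the goal is to produce a Borel subset $B \subset A$ with $0 < \mathcal{H}^s(B) < \infty$, after which the previous case applied to $B$ completes the proof. Since $\mathcal{H}^s_\delta(A) = \infty$ for every $\delta > 0$, one performs a Cantor-type tree construction at dyadic scales $\delta_n = 2^{-n}$. At each stage one selects finitely many closed dyadic cubes of side $\delta_n$ meeting $A$ so that the total $s$-content $\sum_i |Q_n^{(i)}|^s$ remains bounded above by a fixed constant $M$, while each retained cube still carries a prescribed positive amount of $\mathcal{H}^s_{\delta_{n+1}}$-mass relative to $A$. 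Nesting the selections produces a decreasing sequence of compact sets whose intersection $K$ satisfies $\mathcal{H}^s(K) \le M$ by the covering bound, and $\mathcal{H}^s(K) > 0$ by a mass-distribution (Frostman-type) argument applied to the natural product measure on the selection tree.

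The main obstacle is precisely the balance required in the construction for the case $\mathcal{H}^s(A) = \infty$: the retained cubes at each scale must be numerous enough for the intersection to support a nontrivial mass and therefore have positive $\mathcal{H}^s$-measure, yet sparse enough that the total $s$-content stays bounded. This balance is achieved through a pigeonhole argument that exploits $\mathcal{H}^s_\delta(A) = \infty$ to split $A$ at each scale into sub-pieces of comparable Hausdorff pre-measure, and then retains only those pieces carrying a definite, controlled share of the mass. The Borel hypothesis on $A$ enters at the end, via Borel regularity of $\mathcal{H}^s$, to ensure that the compactly built Cantor set $K$ can be arranged to lie inside $A$ itself rather than merely in its closure, up to an $\mathcal{H}^s$-null modification.
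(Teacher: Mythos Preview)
The paper does not supply a proof of this theorem at all: it is quoted verbatim from Falconer's book \cite{Fal} as Theorem~4.10 and used as a black box to derive Lemma~\ref{compset}. There is therefore no ``paper's own proof'' against which to compare your proposal.

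That said, a brief comment on the sketch itself. Your finite-measure case is clean and correct. In the infinite-measure case the outline is broadly the right one (a tree/net construction controlling $s$-content from above while retaining mass from below), but the final sentence papers over the genuine difficulty: the nested dyadic cubes you select merely \emph{meet} $A$, so their intersection $K$ is a priori only contained in $\overline{A}$, and an ``$\mathcal{H}^s$-null modification'' does not obviously repair this, since you have no a priori control on $\mathcal{H}^s(K\setminus A)$. In the standard treatments (Besicovitch, Davies, and the version in \cite{Fal}) this is handled either by first passing to net measures and proving an increasing-sets lemma that lets one approximate Borel (or Souslin) sets from inside by closed sets at the level of the net pre-measure, or by building the tree so that at every stage one retains only cubes whose intersection with $A$ itself carries the prescribed pre-measure, and then invoking compactness of the resulting selection together with Borel regularity more carefully. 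Your sketch would need one of these mechanisms spelled out to be complete.
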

      
In \cite{JP} the above result was also established for the packing dimension.
      \begin{theorem}\label{JPcomp}
            Let $A \subset \mathbb{R}^n$ be a Borel set such that $ 0 < \mathcal{P}^s(A) \le \infty.$ Then there exists a compact set $K \subset A$ such that $ 0< \mathcal{P}^s(K) <\infty.$
            \end{theorem}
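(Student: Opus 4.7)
The plan is to split the task into two steps: first locate a Borel subset $B\subseteq A$ with $0<\mathcal{P}^{s}(B)<\infty$, and then use inner regularity to extract from $B$ a compact set of the same type.

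The second step is routine. Once such a $B$ is in hand, the restriction of $\mathcal{P}^{s}$ to the Borel subsets of $B$ is a finite Borel measure on the Polish space $\mathbb{R}^{n}$. Every such measure is automatically Radon, hence inner regular by compact sets, so one may choose a compact $K\subseteq B\subseteq A$ with $\mathcal{P}^{s}(K)\geq \tfrac{1}{2}\mathcal{P}^{s}(B)>0$; automatically $\mathcal{P}^{s}(K)\leq\mathcal{P}^{s}(B)<\infty$.

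The substance lies in the first step, the passage from $0<\mathcal{P}^{s}(A)\leq\infty$ to a Borel subset of finite positive packing measure. If $\mathcal{P}^{s}(A)<\infty$ already, take $B:=A$. If $\mathcal{P}^{s}(A)=\infty$, one cannot truncate the way one does for Hausdorff measure, since packing measure is not in general $\sigma$-finite on sets of positive finite packing dimension, and a Borel set of positive packing measure may have every relatively open subset of either zero or infinite packing measure. Following Joyce and Preiss \cite{JP}, I would work directly with the packing premeasure $\mathcal{P}^{s}_{0}$: via a Vitali-type packing argument applied at progressively finer scales, one constructs a descending sequence of closed subsets of $A$ whose intersection $K$ supports a non-trivial mass distribution $\mu$ satisfying an upper packing density bound $\limsup_{r\to 0^{+}} \mu(B(x,r))/(2r)^{s}<\infty$ for every $x\in K$. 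A standard mass distribution principle for packing measures then forces $\mathcal{P}^{s}(K)>0$, while the premeasure control yields $\mathcal{P}^{s}(K)<\infty$.

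The main obstacle is precisely this construction. The lack of $\sigma$-finiteness invalidates the elementary ``cut off at finite measure'' trick used in the Hausdorff setting, and the Joyce--Preiss scheme must simultaneously ensure that the subset is thick enough for a Frostman-type argument to guarantee $\mathcal{P}^{s}(K)>0$, yet thin enough that the infimum over countable covers of premeasure values defining $\mathcal{P}^{s}$ remains finite.
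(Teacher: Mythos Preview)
The paper does not give its own proof of this theorem; it is simply quoted as the result of Joyce and Preiss \cite{JP}. Your proposal is a correct outline of precisely that Joyce--Preiss argument (including the key point that the infinite-measure case cannot be reduced by a naive truncation and requires the premeasure/density construction), so your treatment is aligned with---indeed more detailed than---the paper's.
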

      
      \begin{lemma}\label{compset}
      Let $A$ be a compact subset of $\mathbb{R}^n$, and $s \le \dim_H A$. Then there exists a compact set $K \subset A$ such that $ \dim_H K =s.$ The analogous result holds for the packing dimension.
      \end{lemma}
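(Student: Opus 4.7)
The plan is to split on whether $s$ equals $\dim_H A$ or is strictly smaller, and to reduce the strict-inequality case to a direct application of Theorem \ref{falcomp} at the exponent $s$ itself, with no auxiliary exponent needed.

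First I would dispose of the boundary case: if $s = \dim_H A$, then $K := A$ works, since $A$ is already compact by hypothesis and has Hausdorff dimension exactly $s$. Next, suppose $0 \le s < \dim_H A$. By the sup-characterization stated in the paper, $\dim_H A = \sup\{r \ge 0 : H^r(A) = \infty\}$, so the strict inequality $s < \dim_H A$ forces $H^s(A) = \infty$. In particular $0 < H^s(A) \le \infty$, and since $A$ is compact (hence Borel), Theorem \ref{falcomp} produces a compact set $K \subset A$ with $0 < H^s(K) < \infty$. The finiteness and positivity of $H^s(K)$ then pin the Hausdorff dimension of $K$ down to exactly $s$, by the definition of $\dim_H$.

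The packing-dimension assertion is proved by the identical two-case split, replacing $H^s$ by $\mathcal{P}^s$ and Theorem \ref{falcomp} by Theorem \ref{JPcomp}, using the analogous characterization $\dim_P A = \sup\{r \ge 0 : \mathcal{P}^r(A) = \infty\}$ furnished by the paper's definition of packing dimension.

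I do not anticipate a real obstacle here: the whole argument is a bookkeeping exercise once the two measure-selection theorems are in hand. The only point deserving a glance is the degenerate case $s = 0$, which is absorbed into Case 2 without special handling, since $H^0$ is counting measure and $A \neq \emptyset$ guarantees $0 < H^0(A) \le \infty$; the same compact-selection theorem then delivers a finite, nonempty $K \subset A$ of Hausdorff dimension $0$.
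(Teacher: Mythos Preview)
Your proposal is correct and follows essentially the same route as the paper's proof: both dispose of the case $s=\dim_H A$ trivially by taking $K=A$, and for $s<\dim_H A$ both invoke the definition of Hausdorff dimension to obtain $H^s(A)=\infty$, apply Theorem~\ref{falcomp} to extract a compact $K\subset A$ with $0<H^s(K)<\infty$, and read off $\dim_H K=s$; the packing case is handled identically via Theorem~\ref{JPcomp}. Your additional remark on $s=0$ is a harmless elaboration not present in the paper.
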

      \begin{proof}
      Suppose that $s< \dim_H A$. Using the definition of Hausdorff dimension, we have $\mathcal{H}^s(A)=\infty$. Theorem \ref{falcomp} produces a compact subset $K$ of $A$ satisfying $0 < \mathcal{H}^s (K) < \infty.$ Again using the definition of Hausdorff dimension, this implies that $\dim_H K =s.$ The case $s=\dim_H A$ is trivial. Thanks to Theorem \ref{JPcomp}, we have the same result for the packing dimension.
      \end{proof}
      \begin{theorem}
      Suppose $f\in \cC[0,1]$. Then, for each $0 \le \beta \le \dim_H G_f$, there exists a compact set $K \subset [0,1]$ such that $\dim G_f(K) =\beta$, where $G_f(K)=\{(x,f(x)): x \in K\} \subset \mathbb{R}^2.$ The same result holds for the packing dimension.
      \end{theorem}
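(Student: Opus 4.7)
The plan is to move the problem from $[0,1]$ up to the graph, apply Lemma \ref{compset} there, and then project back. First I observe that since $f$ is continuous on the compact interval $[0,1]$, the graph $G_f$ is a compact (in particular Borel) subset of $\mathbb{R}^2$. Hence Lemma \ref{compset} applies directly to $G_f$: for every $\beta$ with $0 \le \beta \le \dim_H G_f$ there is a compact set $K' \subset G_f$ with $\dim_H K' = \beta$, and the analogous statement with $\dim_P$.

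Next I would produce the desired $K \subset [0,1]$ by projecting. Let $\pi_1 \colon \mathbb{R}^2 \to \mathbb{R}$ denote projection onto the first coordinate, and set $K := \pi_1(K')$. Then $K$ is compact (continuous image of a compact set) and contained in $[0,1]$. Because every point of $K' \subset G_f$ has the form $(x,f(x))$, and $K$ collects exactly the first coordinates of such points, one checks immediately that
\[
G_f(K) = \{(x,f(x)) : x \in K\} = K'.
\]
In particular $\dim_H G_f(K) = \dim_H K' = \beta$, and likewise for $\dim_P$.

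The only thing to double-check is the trivial boundary case $\beta = 0$: one may simply take $K$ to be any singleton $\{x_0\} \subset [0,1]$, for which $G_f(K) = \{(x_0,f(x_0))\}$ is a single point and has both Hausdorff and packing dimension zero.

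No real obstacle arises, since the compactness of $G_f$ is what allows Lemma \ref{compset} to apply verbatim; the projection step is a one-line reduction. The only place one needs to be a little careful is that the conclusion asks for $K \subset [0,1]$ (not merely $K' \subset G_f$), which is why the projection step is included, and that projecting does not shrink the set $G_f(K)$ below $K'$ — this is automatic since each $x \in K$ has a unique preimage $(x,f(x)) \in G_f$, so the map $\pi_1|_{G_f}$ is a homeomorphism and $G_f(\pi_1(K')) = K'$ exactly.
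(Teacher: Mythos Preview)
Your proposal is correct and follows essentially the same approach as the paper: apply Lemma \ref{compset} to the compact graph $G_f$ to obtain a compact $K'\subset G_f$ of the desired dimension, then pull back to $[0,1]$. The paper defines $K_2:=\{x\in[0,1]:(x,f(x))\in K_1\}$ and verifies compactness by a sequence argument, which is exactly your $\pi_1(K')$; your use of the fact that $\pi_1|_{G_f}$ is a homeomorphism makes both the compactness of $K$ and the equality $G_f(K)=K'$ a little more transparent, but the underlying idea is identical.
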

      \begin{proof}
      For $\beta \le \dim G_f$. Using Lemma \ref{compset} we have a compact subset $K_1$ of $G_f$ such that $\dim K_1=\beta.$ We now show that there exists a compact set $K_2\subset [0,1]$ such that $G_f(K_2)=K_1.$ Define $K_2$ by $K_2:=\{x \in [0,1]: (x,f(x)) \in K_1\}$. If $(x_n)$ is a sequence in $K_2$ then $(x_n, f(x_n)) \in K_1 \subset G_f.$ By compactness of $K_1$ there exists a convergent subsequence of $\big((x_n, f(x_n)) \big)$. Denote this convergent subsequence again by $(x_n)$ and let $(x,f(x)) \in K_1$ be its limit. Hence, $(x_n)$ converges to $x$ and $x \in K_2$ completing the proof.  
      \end{proof}
      
      \begin{lemma}\label{newlem}
      For fixed $y_0,y_1 \in \mathbb{R}$ and $\beta \in [1,2]$, there exists $f \in \mathcal{C}(I)$ such that $f(0)=y_0,~f(1)=y_1$ and $\dim_H G_f=\beta.$
      \end{lemma}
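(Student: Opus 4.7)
The plan is to split into the three regimes $\beta = 1$, $\beta \in (1,2)$, and $\beta = 2$, because the affine fractal interpolation machinery developed earlier in Section 3 covers the open interval cleanly but degenerates at the upper endpoint. For $\beta = 1$, the linear function $f(x) := y_0 + (y_1 - y_0)x$ does the job trivially: its graph is a line segment, so $\dim_H G_f = 1$, and the endpoint conditions are immediate.

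For $\beta \in (1,2)$, I would invoke the affine FIF construction directly. Choose the partition $\Delta = (0, 1/2, 1)$, the interpolation data $\{(0, y_0), (1/2, y_{1/2}), (1, y_1)\}$ with the middle value chosen so that $y_{1/2} \neq (y_0 + y_1)/2$ (breaking collinearity of the three data points), and the scale vector $\alpha_1 = \alpha_2 = 2^{\beta - 2}$, which lies in $(1/2, 1)$ and hence is admissible. Let $f$ be the resulting affine FIF. By Theorem \ref{BT1}, $f$ interpolates the data, so the endpoint conditions hold automatically. To apply Theorem \ref{Barany}, note that $\sum_i |\alpha_i| = 2^{\beta - 1} > 1$, and a direct substitution shows that condition \eqref{condi} for $(i,j) = (1,2)$ reduces exactly to $y_{1/2} \neq (y_0 + y_1)/2$, which has been arranged. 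The defining equation $2 \cdot 2^{\beta - 2} \cdot (1/2)^{s-1} = 1$ for the Hausdorff dimension in that theorem then forces $s = \beta$.

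For $\beta = 2$, the recipe above would demand $|\alpha| = 1$, violating contractivity, so I would switch strategies and appeal to an external existence result. Take any $g \in \cC[0,1]$ whose graph has Hausdorff dimension $2$, which is provided by the Weierstrass-type examples discussed in \cite{Shen}. Define the Lipschitz affine correction $\ell(x) := (y_0 - g(0)) + x \bigl[(y_1 - g(1)) - (y_0 - g(0))\bigr]$ and set $f := g + \ell$. The endpoint values are then correct by construction, and Lemma \ref{lipdim} preserves the Hausdorff dimension of the graph under addition of a Lipschitz function, yielding $\dim_H G_f = \dim_H G_g = 2$.

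The main obstacle I anticipate is the boundary case $\beta = 2$: the FIF machinery developed in the paper cannot be stretched to reach it, so one is forced to import a full-dimension continuous graph from outside the FIF framework and then use the Lipschitz stability from Lemma \ref{lipdim} to realign the endpoints. For $\beta \in (1,2)$, the work reduces to verifying the hypotheses of Theorem \ref{Barany}, which is routine bookkeeping once the partition, data, and scale vector have been selected as above.
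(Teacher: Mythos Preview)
Your argument is correct, but it takes a different and more elaborate route than the paper. The paper handles every $\beta\in[1,2]$ uniformly in two lines: by Theorem~\ref{densethm} (which already presupposes $S_\beta\neq\emptyset$) it picks $h\in\mathcal{C}(I)$ with $h(0)=y_0$ and $\dim_H G_h=\beta$, then adds the linear map $g(x)=(y_1-h(1))x$ to fix the right endpoint and invokes Lemma~\ref{lipdim}. Your proof does exactly this Lipschitz-correction step for $\beta=2$ (and trivially for $\beta=1$), but for $\beta\in(1,2)$ you replace the abstract existence input by an explicit affine FIF built via Theorem~\ref{Barany}. The gain is constructiveness on the open interval---you actually exhibit the function rather than appealing to density---at the cost of a three-way case split and the parameter bookkeeping you carried out. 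One small correction: the classical Weierstrass functions in \cite{Shen} have Hausdorff dimension strictly below $2$, so that citation does not supply a graph of dimension exactly $2$; you should appeal to a different construction there (the paper itself simply relies on $S_2\neq\emptyset$ via Theorem~\ref{densethm}, leaving that existence as background).
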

      \begin{proof}
      In the light of Lemma \ref{densethm} we choose $h \in \mathcal{C}(I)$ with $h(0)=y_0$ and $\dim_H G_h=\beta.$ Define a Lipschitz mapping $g:[0,1] \to \mathbb{R}$ by $g(x)=(y_1 -h(1))x.$ Defining a map $f:[0,1]\to \mathbb{R}$ by $f=g+h$, Lemma \ref{lipdim} in turn yields the result.
      \end{proof}
        The next theorem is a modification of Proposition $2.3$ appeared in \cite{Liu}. For the convenience of the reader, we include the proof. 
      \begin{theorem}\label{extens}
       Let $X$ be a proper compact subset of $[0,1]$ and function $f : X \to \mathbb{R}$ be a continuous. Then for each $\max \{\dim_H G_f(X), 1\} \le \beta \le 2$, the function $f$ can be extended continuously to a continuous function $\tilde{f}:[0,1]\to \R$ such that 
\[
\dim_H G_{\tilde{f}}([0,1]) = \beta.
\]
The result also holds for packing dimension.
       \end{theorem}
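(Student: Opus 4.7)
The plan is to extend $f$ across each connected component of the open set $[0,1]\setminus X$ by a suitably scaled template of prescribed dimension, then read off the dimension of the whole graph using Lemma \ref{lipdim} together with the countable stability of $\dim_H$ (and $\dim_P$).

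First, I would reduce to the case $0,1\in X$: if, say, $0\notin X$, then because $X$ is closed, $0$ has a relative neighborhood in $[0,1]$ disjoint from $X$, so adjoining $0$ to $X$ as an isolated point with an arbitrary value of $f$ keeps the function continuous and enlarges $G_f(X)$ by at most two points, leaving both $\dim_H G_f(X)$ and $\dim_P G_f(X)$ unchanged. The open set $[0,1]\setminus X$ then decomposes as a countable disjoint union $\bigsqcup_k(a_k,b_k)$ with $a_k,b_k\in X$. Apply Lemma \ref{newlem} with zero boundary data to fix a single template $h_0\in\mathcal{C}[0,1]$ with $h_0(0)=h_0(1)=0$ and $\dim_H G_{h_0}=\beta$ (available because $1\le\beta\le 2$), and choose positive weights $\epsilon_k$ with $\epsilon_k\to 0$.

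Second, for each $k$ let $\phi_k:[0,1]\to[a_k,b_k]$ be the affine bijection and set
\[
\tilde f(x):=f(a_k)+\frac{f(b_k)-f(a_k)}{b_k-a_k}(x-a_k)+\epsilon_k\,h_0\!\big(\phi_k^{-1}(x)\big),\qquad x\in[a_k,b_k],
\]
together with $\tilde f:=f$ on $X$. The vanishing of $h_0$ at $0$ and $1$ guarantees $\tilde f(a_k)=f(a_k)$ and $\tilde f(b_k)=f(b_k)$, and continuity inside each gap is clear. The delicate step will be continuity at a point $x_0\in X$ that accumulates infinitely many gaps: I would split into the case where $x_n\to x_0$ lies eventually in one fixed component (handled by continuity of $h_0$ at the relevant endpoint $0$ or $1$), and the case of distinct components $(a_{k_n},b_{k_n})$, where disjointness forces $\sum_n(b_{k_n}-a_{k_n})\le 1$, so the lengths tend to zero, both endpoints converge to $x_0$, and continuity of $f$ on $X$ combined with $\epsilon_{k_n}\to 0$ yields $\tilde f(x_n)\to f(x_0)$.

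Third, on each $[a_k,b_k]$ the function $\tilde f$ is the sum of a Lipschitz affine interpolant and a bi-Lipschitz copy of $\epsilon_k h_0$, so Lemma \ref{lipdim} and the bi-Lipschitz invariance of $\dim_H$ under $(t,y)\mapsto(\phi_k(t),\epsilon_k y)$ yield $\dim_H G_{\tilde f}([a_k,b_k])=\dim_H G_{h_0}=\beta$. Writing $G_{\tilde f}([0,1])=G_f(X)\cup\bigcup_k G_{\tilde f}([a_k,b_k])$, the countable stability property \eqref{stab} together with the hypothesis $\dim_H G_f(X)\le\beta$ closes the argument. The packing case is identical, since $\dim_P$ is likewise countably stable and bi-Lipschitz invariant. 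The main obstacle I expect is the continuity verification just described; its key input is the elementary measure observation that only finitely many disjoint subintervals of $[0,1]$ can have length exceeding any prescribed $\delta>0$.
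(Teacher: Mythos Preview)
Your argument is correct and follows the same overall architecture as the paper's: write $[0,1]\setminus X$ as a countable union of open gaps with endpoints in $X$, fill each gap by a continuous piece whose graph has dimension $\beta$, and finish via countable stability of $\dim_H$ (resp.\ $\dim_P$). The paper simply invokes Lemma~\ref{newlem} independently on each gap $[a_i,b_i]$ to obtain extensions $g_i$ matching the boundary data, and then asserts without further comment that the resulting $\tilde f$ is continuous on all of $[0,1]$.

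Your route differs in one substantive technical respect worth noting: by fixing a single template $h_0$ and scaling it with weights $\epsilon_k\to 0$, you explicitly force the oscillation of $\tilde f$ on the $k$-th gap to vanish as $k\to\infty$, which is exactly what the continuity verification at accumulation points of $X$ requires. The paper's bare appeal to Lemma~\ref{newlem} carries no such control---that lemma gives no bound on $\|g_i - (\text{affine interpolant})\|_\infty$---so its ``Clearly, $\tilde f$ is continuous'' is in fact not justified as written; your construction supplies precisely the missing ingredient. Apart from this refinement, the dimension bookkeeping (Lemma~\ref{lipdim} plus bi-Lipschitz invariance on each gap, then countable stability and the hypothesis $\dim_H G_f(X)\le\beta$) is the same in both proofs.
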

       \begin{proof} 
       Let $X$ be a proper compact subset of $[0,1]$ and $f:X \to \mathbb{R}$ a continuous function. Now we consider the following possibilities:
       \begin{enumerate}
       	\item $0,1\in X.$
       	\item $0 \in X$ and $1 \notin X.$ 
       	\item $1 \in X$ and $0 \notin X.$
       	\item $0,1\notin X$
       \end{enumerate}
       We write $[0,1]\backslash X$ for each of the four cases above as follows:
       \begin{enumerate}
       \item $[0,1]\setminus X = \displaystyle\bigcup\limits_{i = 1}^{\infty}(a_i,b_i)$, with $a_i,b_i\in X$ for each $i \in \mathbb{N}.$
       \item $[0,1]\setminus X = \displaystyle\bigcup\limits_{i = 1}^{\infty}(a_i,b_i)\cup\{1\}$, with $a_i,b_i\in X$ for each $i \in \mathbb{N}.$
       \item $[0,1]\setminus X = \displaystyle\bigcup\limits_{i = 1}^{\infty}(a_i,b_i)\cup\{0\}$, with $a_i,b_i\in X$ for each $i \in \mathbb{N}.$
       \item $[0,1]\setminus X = \displaystyle\bigcup\limits_{i = 1}^{\infty}(a_i,b_i)\cup\{0,1\}$, with $a_i,b_i\in X$ for each $i \in \mathbb{N}.$
       \end{enumerate}
       By the finite stability of Hausdorff dimension (cf. \cite{Fal}), we claim that it is enough to deal with the first case. Applying Lemma \ref{newlem} for each intervals $[a_i,b_i]$, 
       we extend the function $f$ as follows:
       \begin{align*}
       \tilde{f}(x) := \begin{cases}
       f(x),&~~x\in X;\\ \\
       g_i(x),&~~x\in (a_i,b_i)~ \text{for some}~ i \in \mathbb{N},
       \end{cases}
       \end{align*}
       where $g_i(a_i)=f(a_i),g_i(b_i)=f(b_i)$ and $\dim_H G_{g_i}=\beta.$ 
       Clearly, $\tilde{f}$ is continuous on $[0,1].$ Using the countable stability of the Hausdorff dimension \eqref{stab}, it follows that 
       \begin{align*}
        \dim_H G_{\tilde{f}}([0,1]\backslash X) =\sup_{i \in \mathbb{N} } \{ \dim_H G_{\tilde{f}}((a_i,b_i)) \}
       =  \sup_{i \in \mathbb{N} } \{ \beta \} = \beta,
       \end{align*}
       and
       \begin{align*}
       \dim_H G_{\tilde{f}}([0,1])  &= \max\{ \dim_H G_{\tilde{f}}(X) ,\dim_H G_{\tilde{f}} ([0,1]\backslash X)  \}\\
       &= \max\{ \dim_H G_{f}(X) , \beta \}\\ & =\beta.
       \end{align*}
       Hence we obtain the result for the Hausdorff dimension. Since the packing dimension is also countably stable, the result for $\dim_P$ follows immediately.
       \end{proof}
   \section{Summary}   
   In this article we investigated a new notion of constrained approximation through fractal dimensions. Further, we constructed dimension preserving approximants to a prescribed function. In the last part of article, we introduced and investigated the restrictions to and extensions of continuous functions in terms of fractal dimensions.
 \subsection*{Acknowledgements}
   The first author expresses his gratitude to the University Grants
   Commission (UGC), India, for financial support, and to his supervisor Dr. P. Viswanathan for his support and encouragement.

\bibliographystyle{amsplain}

\end{document}